\documentclass[12pt]{article}
\usepackage{amsfonts,amssymb,amsbsy,amsmath,amsthm,mathrsfs,enumerate,verbatim}
 \usepackage[colorlinks=true]{hyperref}
 \hypersetup{urlcolor=blue, citecolor=red}
\topmargin -1cm
\textheight21.4cm
\textwidth15.7cm
\oddsidemargin 0.5cm
\evensidemargin 0.5cm
\parindent0.4cm

\newtheorem{theorem}{Theorem}[section]
\newtheorem{proposition}[theorem]{Proposition}
\newtheorem{lemma}[theorem]{Lemma}
\newtheorem{follow}[theorem]{Corollary}
\newtheorem{assumption}[theorem]{Assumption}
\newtheorem{pr}[theorem]{Proposition}
\theoremstyle{definition}

\newcommand{\bel}{\begin{equation} \label}
\newcommand{\ee}{\end{equation}}

\newcommand{\pd}{\partial}

\newcommand{\R}{{\mathbb R}}

\newcommand{\F}{{\mathcal F}}

\newcommand{\Q}{{\mathcal Q}}

\def\beq{\begin{equation}}
\def\eeq{\end{equation}}
\newcommand{\bea}{\begin{eqnarray}}
\newcommand{\eea}{\end{eqnarray}}
\newcommand{\beas}{\begin{eqnarray*}}
\newcommand{\eeas}{\end{eqnarray*}}
\newcommand{\Pre}[1]{\ensuremath{\mathrm{Re} \left( #1 \right)}}
\newcommand{\Pim}[1]{\ensuremath{\mathrm{Im} \left( #1 \right)}}
{

\begin{document}

\begin{center}
{\Large \bf Carleman estimate for infinite cylindrical quantum domains and application to inverse problems}

\medskip

\end{center}

\medskip

\begin{center}
{\sc  \footnote{CPT, CNRS UMR 7332, Universit\'e d'Aix-Marseille, 13288 Marseille, France \& Universit\'e du Sud-Toulon-Var, 83957 La Garde, France.}{Yavar Kian}, \footnote{Institute of Mathematics, Faculty of Mathematics and Computer Science, Jagiellonian University, 30-348 Krak\'ow, Poland.}{Quang Sang Phan},\footnote{CPT, CNRS UMR 7332, Universit\'e d'Aix-Marseille, 13288 Marseille, France \& Universit\'e du Sud-Toulon-Var, 83957 La Garde, France.}{Eric Soccorsi}}
\end{center}

\begin{abstract}
We consider the inverse problem of determining the time independent scalar potential $q$ of the dynamic Schr\"odinger equation in an infinite cylindrical domain $\Omega$, from one Neumann boundary observation of the solution. Assuming that $q$ is known outside some fixed compact subset of $\Omega$, we prove that $q$ may be Lipschitz stably retrieved by choosing the Dirichlet boundary condition of the system suitably. Since the proof is by means of a global Carleman estimate designed specifically for the Schr\"odinger operator acting in an unbounded cylindrical domain, the Neumann data is measured on an infinitely extended subboundary of the cylinder. 
\end{abstract}

\medskip

{\bf  AMS 2010 Mathematics Subject Classification:} 35R30.\\

{\bf  Keywords:} Inverse problem, Schr\"odinger equation, scalar potential, Carleman estimate, infinite cylindrical domain.\\

\tableofcontents


\section{Introduction}
\label{sec-intro} 
\setcounter{equation}{0}

\subsection{What we are aiming for}

In the present paper we consider the infinite cylindrical domain $\Omega=\omega \times \R$, where $\omega$ is a
connected bounded open subset of $\R^{n-1}$, $n\geq 2$, with $C^{2}$-boundary $\partial \omega$. Given $T>0$ we examine the following
initial boundary value problem 
\bel{eq1}
\left\{ \begin{array}{rcll} -i u'-\Delta u + q(x)u & = & 0, & {\rm in}\ Q:=(0,T) \times \Omega,\\ u(0,x) &= & u_0(x), & x\in \Omega,\\
u(t,x)& = & g(t,x),& (t,x)\in \Sigma:=(0,T) \times \Gamma, \end{array}\right. 
\ee 
where $\Gamma:=\pd \omega \times \R$ and the ' stands for
$\frac{\partial}{\partial t }$. Here $u_0$ (resp., $g$) is the initial (resp., boundary) condition associated to \eqref{eq1} and $q$ is a function of $x \in \Omega$ only. 

Since $\Gamma$ is unbounded we make the boundary condition in the last line of \eqref{eq1} more precise. Writing $x:=(x',x_n)$ with $x':=(x_1,\ldots,x_{n-1}) \in \omega$ for every $x \in \Omega$ we extend the mapping
\bea    
C_0^\infty ((0,T)\times \R; {\rm H}^2(\omega ))& \longrightarrow & {\rm L}^2((0,T) \times \R; {\rm H}^{3/2}(\partial \omega ))) \nonumber \\
v & \mapsto & [ (t,x_n) \in (0,T) \times \R \mapsto v(t,\cdot,x_n)_{|\partial \omega}], \label{g0} 
\eea
to a bounded operator from $\rm{L}^2 ((0,T)\times \R; {\rm H}^2(\omega ))$ into $\rm{L}^2((0,T)\times \R; {\rm H}^{3/2}(\partial \omega ))$, denoted by $\gamma_0$. Then
for every $u \in C^0([0,T];{\rm H}^2(\Omega))$ the above mentioned boundary condition reads $\gamma_0 u =g$.

The main purpose of this article is to extend \cite{BP}[Theorem 1], obtained for the dynamic Schr\"odinger operator in a bounded domain, to the case of the unbounded waveguide 
$\Omega$. That is to say, to prove Lipschitz stability in the determination of the scalar potential $q$ (assumed to be known outside some fixed compact subset of $\Omega$) from one boundary measurement of the normal derivative of the solution $u$ to \eqref{eq1}. 

The method of derivation of \cite{BP}[Theorem 1] is by means of a Carleman estimate for the Schr\"odinger equation, which was established in \cite{BP}[Proposition] for a bounded domain. Since the unknown part of $q$ is compactly supported, then it seems at first sight quite reasonnable to guess that this
question (arising from the problem of evaluating the electrostatic quantum disorder in nanotubes, see \S \ref{sec-pm} below) could well be answered by adapting the above technique to some suitable truncation (for some bounded domain of $\R^n$) of $u$. Nevertheless we shall prove that such a strategy necessarily adds unexpected ``control" terms (i.e. ``volume observations" of the solution) in the right hand side of the corresponding stability inequality, and is therefore inaccurate.

It turns out that this inconvenience can be avoided upon substituting some specifically designed Carleman estimate for the Schr\"odinger equation in a unbounded cylindrical domain to the one of \cite{BP}[Proposition 3]. This new global Carleman inequality, established in Proposition \ref{pr-unboundedCarl}, is the main novelty of this paper as it is the main tool for generalizing the Lipschitz stability inequality of \cite{BP}[Theorem 1] to the unbounded domain $\Omega$ under consideration.

It is worth noticing that the field of applications of Proposition \ref{pr-unboundedCarl} go beyond the study of inverse compactly supported quantum scalar coefficients PDE problems examined in this framework. As a matter of facts we prove with the aid of Proposition \ref{pr-unboundedCarl} in a companion article \cite{KiPhSo2}, that scalar potentials which are not necessarily known outside some given compact subset of $\Omega$, may nevertheless be H\"older stably retrieved from one boundary measurement of the normal derivative of $u'$.

Finally, let us mention that the Lipschitz stability estimate \cite{BP}[Theorem 1] was established under the additional technical assumption that both $u$ and $u'$ are time square integrable bounded functions of the space variables. As we aim to generalize this result to the case of the unbounded domain $\Omega$, we do the same here by supposing \eqref{as}. Nevertheless we stress out that sufficient conditions on $q$ and the initial state $u_0$ (which are not detailed in this text in order to avoid the inadequate expense of the size of the paper) ensuring assumption \eqref{as}, can be found in \cite{KiPhSo2}[Theorem 1.1].

\subsection{Physical motivation: estimating the electrostatic quantum disorder in nanotubes}
\label{sec-pm}
The equations of \eqref{eq1} describe the evolution of the wave function of a charged particle (in a ``natural" system of units where the various physical constants such as the mass and the electric charge of the particle are taken equal to one), under the influence of the ``electric" potential $q$. Moreover the quantum motion of this particle is constrained by the waveguide $\Omega$. 

Carbon nanotubes exhibit unusual physical properties, which are valuable for electronics, optics and other fields of materials science and technology. These peculiar nanostructures have a length-to-diameter ratio up to $10^8 /1$. This justifies why they may be rightfully modelled by infinite cylindrical domains such as $\Omega$. Unfortunately their physical properties are commonly affected by the inevitable presence of electrostatic quantum disorder, see e. g. \cite{CL, KBF}. This motivates for a closer look into the inverse problem of estimating the strength of the ``electric impurity potential" $q$ from the (partial) knowledge of the wave function $u$. 

\subsection{Existing papers}
There is a wide mathematical literature dealing with uniqueness and stability in inverse coefficient problems related to partial differential equations, see e. g. \cite{B, BY, CrS, I, IY,KY}.
For the stationnary (elliptic) Schr\"odinger equation, Bukhgeim and Uhlmann proved in \cite{BU} that the knowledge of the Dirichlet to Neumann (DN) map measured on some part of the boundary determines uniquely the potential. In dimension $n \geq 3$ this result was improved by Kenig, Sj\"ostrand and Uhlmann in \cite{KSU} and L. Tzou proved in \cite{T} that the electric potential (together with the magnetic field) depends stably on the Cauchy data even when the
boundary measurement is taken only on a subset that is slightly larger than half of the boundary.  Their method is essentially based on the construction of a rich set of ``optics geometric solutions" to the Dirichlet problem.
In these three papers, the knowledge of the DN map, taking infinitely many boundary measurements of the solution to the Schr\"odinger equation, is actually required.
 
The problem of stability in determining the
time-independent electric potential in the dynamic Schr\"odinger equation from a single boundary
measurement was treated by Baudouin and Puel in \cite{BP}. This result was improved by
Mercado, Osses and Rosier in \cite{MeOsRo}. In these two papers, the main
assumption is that the part of the boundary where the measurement is made satisfies
a geometric condition related to geometric optics condition insuring observability (see Bardos, Lebeau and Rauch \cite{BLR}).
This geometric condition was relaxed in \cite{BC} under the assumption that the potential
is known near the boundary.

In all the above mentioned articles the Schr\"odinger equation is defined in a bounded domain.
In the present paper we rather investigate the problem of determining the scalar potential of the Schr\"odinger equation in an infinite cylindrical domain. There are only a few mathematical papers dealing with inverse 
boundary value problems in an unbounded domain available in the mathematical literature. In \cite{LU} Li and Uhlmann prove uniqueness in the determination of the scalar potential in an infinite slab from partial DN map. In \cite{CCG} Cardoulis, Cristofol and Gaitan obtain Lipschitz stability from a single lateral measurement performed on one side of an unbounded strip. 

More specifically for an inverse boundary value problem stated in the waveguide geometry examined in this work we refer to \cite{CS, CKS} where stability is claimed for various coefficients of the Schr\"odinger equation from the knwoledge of the DN map. Here we investigate the same type of problems, but in absence of any information given by the DN map, by a method based on an appropriate Carleman estimate. We refer to \cite{A, BP, T} for actual examples of this type inequalities designed for the Schr\"odinger equation. The original idea of using a Carleman estimate to solve inverse problems goes back to the pioneering article \cite{BK} by Bukhgeim and Klibanov. Since then this technique has then been widely and succesfully used 
by numerous authors, see e.g. \cite{BP, B, BCS, BY, CS, Im, IY2, IsY, KY, MeOsRo}, in the study of inverse wave propagation, elasticity or parabolic problems.

\subsection{Main results}
In this section we state the main results of this article and briefly comment on them. 

For $M>0$ and $p \in W^{2,\infty}(\Omega)$ fixed, we define the set of ``admissible scalar potentials" as 
$$ \Q_M(p):= \{ q \in W^{2,\infty}(\Omega;\R),\ \|  q \|_{W^{2,\infty}(\Omega)}  \leq M\ \mbox{and}\ q(x)=p(x)\ \mbox{for a.e.}\ x \in \Gamma \}. $$
Here the identity $q_{|\Gamma} = p_{\Gamma}$ is understood in the sense of the usual trace operator from ${\rm L}^\infty(\R,{\rm H}^2(\omega))$ into
${\rm L}^\infty(\R,{\rm H}^{3 / 2}(\partial \omega))$. By selecting $q \in \Q_M(p)$ we thus enjoin fixed value to $q$ on the boundary, which is the mesurement on $\Gamma$ of the scalar potential we want to determine. Similar (or stronger) ``compatibility conditions" imposed on inverse problems coefficients have already been used in various contexts, see e.g. \cite{BC,BCS,CrS}.

We are concerned with the stability issue around any $q_1 \in \Q_M(p)$, i.e. we want to upper bound the ${\rm L}^2$ norm of $q_1-q_2$ by some increasing function of the difference $u_1-u_2$. That is to say that $q_2 \in \Q_M(p)$ and the solution $u_j$, for $j=1,2$, to \eqref{eq1}, where $q_j$ is substituted for $q$, are known, while $q_1$ is unknown.

\begin{theorem} 
\label{thmpr}
For $p \in W^{2,\infty}(\Omega)$, $M>0$, $\ell >0$ and $\alpha>0$ fixed, let $u_0 \in {\rm H}^4(\Omega;\R)$ obey
\bel{eqinitiale}
u_0(x) \geq \alpha >0,\ x \in \omega \times (-\ell,\ell),
\ee
let $q_j \in \Q_M(p)$, $j=1,2$, fulfill
\bel{eq1b}
q_1(x)=q_2(x),\ x \in \omega \times (\R \setminus (-\ell,\ell)),
\ee
and let $u_j$ denote the $C^1([0,T];{\rm H}^2(\Omega)  \cap {\rm H}_0^1(\Omega)) \cap C^2([0,T];{\rm L}^2(\Omega))$-solution to \eqref{eq1} associated to $u_0$, $g=\gamma_0 G$ and $q_j$, where
\bel{G}
G(t,x):= u_0(x)+ it (\Delta- p)u_0(x),\ (t,x) \in Q. 
\ee
Assume that
\bel{as}
u_2 \in {\rm H}^1(0,T;{\rm L}^{\infty}(\Omega)).
\ee
Then for every $L > \ell$, there exist $\Gamma_* \subset \partial \omega \times (-L,L)$ and a constant $C>0$ depending only on $L$, $T$, $M$, $\omega$ and $\Gamma_*$, such that we have
\bel{eq1a}
\| q_1-q_2 \|_{ {\rm  L}^2(\Omega)}
\leq  C  \left(  \|\partial _{\nu}    (u_1'-u_2') \|_ { {\rm L}^2((0,T ) \times \Gamma_*)} + \|
u_1-u_2  \| _{ {\rm H}^1( 0,T;{\rm H}^1(\omega \times \mathcal{S}_L ) )} \right),
\ee
with $\mathcal{S}_L:=(-L,-\ell) \cup (\ell, L)$.\\
Moreover there is a subboundary $\gamma_* \subset \partial \omega$, such that the estimate
\bel{eqa2}
\| q_1-q_2 \|_{ {\rm  L}^2(\Omega)}
\leq  C' \|\partial_{\nu} u_1'- \partial_{\nu}u_2' \|_ { {\rm L}^2((0,T) \times \gamma_* \times \R)},
\ee
holds for some positive constant $C'$ depending only on $\ell$, $T$, $M$, $\omega$ and $\gamma_*$.
\end{theorem}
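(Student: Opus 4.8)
The plan is to derive the stability estimate from a suitable global Carleman estimate for the Schrödinger operator in the unbounded cylinder $\Omega$, namely Proposition \ref{pr-unboundedCarl}, following the Bukhgeim--Klibanov strategy adapted to the waveguide geometry. Writing $u:=u_1-u_2$ and $q:=q_1-q_2$, I would first subtract the two copies of \eqref{eq1}, obtaining that $u$ solves $-iu'-\Delta u+q_1 u = -q\, u_2$ in $Q$ with $u(0,\cdot)=0$ and $\gamma_0 u=0$ on $\Sigma$. The key trick is to differentiate in time: setting $y:=u'$, the function $y$ satisfies a Schrödinger equation with source $-q\,u_2'$, and crucially the choice of boundary data $g=\gamma_0 G$ with $G$ as in \eqref{G} is engineered precisely so that at $t=0$ one recovers $u(0,\cdot)=0$ and $u'(0,\cdot)=-i(\Delta-p)u_0 + i(\Delta-q_1)u_0 = i(q_1-p)u_0$; combined with \eqref{eq1b} and the fact that $q_1=q_2=p$ on $\Gamma$, this forces $u'(0,x)=i\,q(x)\,u_0(x)$ on $\Omega$, so that the lower bound \eqref{eqinitiale} $u_0\geq\alpha>0$ on $\omega\times(-\ell,\ell)$ together with \eqref{eq1b} lets me recover $\|q\|_{L^2}$ from the behavior of $y$ at the initial time.

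Next I would extend the solution to the symmetric time interval $(-T,T)$ by the usual reflection, so that the Carleman weight, singular in time at $t=0$, can be applied. Applying the Carleman estimate of Proposition \ref{pr-unboundedCarl} to $y$ yields a weighted $L^2$ control of $y$ and $\nabla y$ in the interior of $Q$ by the weighted $L^2$ norm of the source $q\,u_2'$ plus a boundary term $\|\partial_\nu y\|_{L^2((0,T)\times\Gamma_*)}$ on the observed subboundary. The source term is absorbed using assumption \eqref{as}, which guarantees $u_2'\in L^\infty$ so that $\|q u_2'\|\lesssim\|q\|_{L^2}$; the absorption is made legitimate by taking the large Carleman parameter $s$ big enough. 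The heart of the argument is then the Bukhgeim--Klibanov energy estimate: integrating by parts in time against the Carleman weight converts the interior weighted control of $y$ at $t=0$ into a lower bound for $\int_\Omega |q|^2 u_0^2\, e^{2s\varphi(0,\cdot)}\,dx$, which by \eqref{eqinitiale}, \eqref{eq1b} and the compact support of $q$ bounds $\|q\|_{L^2(\Omega)}^2$ from below. Collecting terms gives \eqref{eq1a}, where the residual region $\omega\times\mathcal{S}_L$ carries the leftover volume term $\|u_1-u_2\|_{H^1(0,T;H^1(\omega\times\mathcal{S}_L))}$ coming from the truncation needed to localize the weight near the support of $q$.

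Finally, to upgrade \eqref{eq1a} to the cleaner estimate \eqref{eqa2} with a purely lateral observation on $\gamma_*\times\R$, I would exploit the translation invariance of the cylinder in the $x_n$ direction together with a scaling/optimization over the parameter $L$. The idea is that since $q$ is supported in $\omega\times(-\ell,\ell)$, one may slide the truncation window and average the family of estimates \eqref{eq1a}, or equivalently choose a weight unbounded in $x_n$, so that the interior remainder on $\omega\times\mathcal{S}_L$ is itself dominated by the boundary observation of $\partial_\nu(u_1'-u_2')$ on the full infinite lateral strip $\gamma_*\times\R$; this is exactly where the genuinely unbounded-domain Carleman estimate, rather than a bounded-domain truncation, is indispensable, and I expect this absorption of the volume term to be the main obstacle. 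Throughout, the delicate points are the legitimacy of the integration by parts at the singular time $t=0$ (handled by the standard regularization of the weight) and ensuring that all constants remain uniform in the axial variable so that the limit giving an infinitely extended measurement subboundary is meaningful.
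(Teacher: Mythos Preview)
Your overall framework---differentiate in time, linearize, extend by reflection to $(-T,T)$, apply a global Carleman estimate, and use the Bukhgeim--Klibanov lower bound coming from $y(0,\cdot)=i(q_2-q_1)u_0$ together with \eqref{eqinitiale}---matches the paper's approach. But two points need correction.

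First, a minor but conceptually important slip: the Carleman weight $\eta$ in \eqref{defphieta} is singular at $t=\pm T$, not at $t=0$. The blow-up at the endpoints is precisely what makes the time-boundary terms vanish in the integration by parts; the value at $t=0$ is finite and yields the lower bound on $\|e^{-s\eta(0,\cdot)}(q_1-q_2)u_0\|_{L^2}$. Also, for \eqref{eq1a} the paper does \emph{not} use Proposition~\ref{pr-unboundedCarl}: it truncates with a cut-off $\chi(x_n)$ supported in $(-r,r)$, applies the \emph{bounded}-domain Carleman estimate (Proposition~\ref{pr-Carl}) on a bounded set $\mathcal{O}\subset\Omega_L$, and the commutator $[\Delta,\chi]$ is exactly what produces the volume observation on $\omega\times\mathcal{S}_L$. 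Your paragraph mixes the two routes.

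Second, and this is the genuine gap: your proposed derivation of \eqref{eqa2} from \eqref{eq1a} by ``sliding the truncation window and averaging'' or ``optimizing over $L$'' does not work and is not what the paper does. The constant $C$ in \eqref{eq1a} depends on $L$ and blows up as $L\downarrow\ell$ (the paper says this explicitly), while for large $L$ the volume term does not become small either; there is no averaging that absorbs it. The paper obtains \eqref{eqa2} by a \emph{separate, direct} application of Proposition~\ref{pr-unboundedCarl} to $v=u'$ on the full cylinder, with no truncation at all. The weight there depends only on $x'$, so no cut-off in $x_n$ is needed, no commutator term appears, and the only observation is the lateral one on $\gamma_*\times\R$. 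The price is that the Neumann data must be measured on the infinitely extended subboundary. So your final paragraph should be replaced by: apply Proposition~\ref{pr-unboundedCarl} directly to $v$ solving \eqref{eq5b}, bound the source by \eqref{as} as before, and absorb for $s$ large; this gives \eqref{eqa2} in one step.
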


Under the prescribed conditions \eqref{eqinitiale}--\eqref{G} and assumption \eqref{as}, the first statement \eqref{eq1a} of Theorem \ref{thmpr} claims Lipschitz stability in the determination of the scalar potential appearing in the dynamic Schr\"odinger equation in $\Omega$ from two different observations of the solution $u$ to \eqref{eq1}. 
\begin{enumerate}[$\bullet$]
\item The first one is a lateral measurement on some subboundary of $\partial \omega \times (-L,L)$ of the normal derivative 
$$ \partial_\nu u(t,x):= \nabla u(t,x) \cdot \nu(x),\ (t,x) \in \Sigma, $$
where $\nu$ is the outward unit normal to $\Gamma$. Since
\bel{normale}
\nu(x)=\nu(x')=\left( \begin{array}{c} \nu'(x') \\ 0 \end{array} \right),\ x=(x',x_n) \in \Gamma,
\ee
where $\nu'(x') \in \R^{n-1}$ denotes the outgoing normal vector to $\partial \omega$ computed at $x'$, we notice that
\bel{normale2}
\partial_\nu u(t,x)= \partial_{\nu'} u(t,x) := \nabla_{x'} u(t,x) \cdot \nu'(x'),\ (t,x)=(t,x',x_n) \in \Sigma,
\ee
where $\nabla_{x'}$ stands for the gradient operator w.r.t. $x' \in \omega$.
\item The second observation is an internal measurement of $u$ which is performed in each of the two ``slices" $\mathcal{S}_{L}^-:=\omega \times (-L,-\ell)$ and $\mathcal{S}_{L}^+:=\omega \times (\ell,L)$ of $\Omega$. Although the $\R^n$-Lebesgue measure of $\mathcal{S}_{L}^{\pm}$ can be made arbitrarily small by taking $L$ sufficiently close to $\ell$, this ``volume observation" cannot be removed from the right hand side of \eqref{eq1a} by taking $L$ asymptotically close to $\ell$ since the prefactor (i.e. the constant $C$) tends to infinity as $L$ goes to $\ell$.
\end{enumerate}
The occurence of this internal measurement of $u$ in \eqref{eq1a} is due to the unbounded geometry of $\Omega$. More precisely this is a direct consequence of the technique used for the derivation of the stability inequality \eqref{eq1a}, which is by means of a global Carleman estimate for the Schr\"odinger equation in a bounded domain. Indeed, this strategy requires a cut off function with first derivative supported in $(-L,-\ell) \cup (\ell,L)$, which gives rise to the measurement of $u$ in $\mathcal{S}_{L}^{\pm}$. Notice that the use of a Carleman estimate known to be valid in a bounded domain of $\R^n$ only, was made possible here since the difference $q_1-q_2$ is compactly supported in $\R^n$. A fact that follows from assumption \eqref{eq1b} expressing that the scalar potential to be retrieved is known outside some fixed bounded set.

One way to get rid of both volume observations simultanesously is to use a global Carleman estimate specifically designed for the unbounded quantum waveguide $\Omega$, which is stated in Proposition \ref{pr-unboundedCarl}. This yields \eqref{eqa2}, implying that the electrostatic quantum potential is now Lipschitz stably retrieved in $\Omega$ from only one lateral measurement of the normal derivative $\partial_\nu u$ on some subboundary of $\Sigma$. This result is similar to the one obtained in a bounded domain (under the same assumption as \eqref{as}) by Baudouin and Puel, see \cite{BP}[Theorem 1]. It should nevertheless be noticed that, contrarily to \eqref{eq1a}, and despite of the fact that the scalar potential under identification is assumed to be known outside a compact set, the Neumann data required in the right hand side of \eqref{eqa2} is measured on an infinitely extended subboundary of $\Sigma$.

Finally, it is worth mentioning that Theorem \ref{thmpr} applies for a wide class of subboundaries $\Gamma_*$. Indeed, given a $C^2$-domain $\mathcal{O}$ in $\R^n$ obeying
$$ \omega \times(-\ell,\ell) \subset \mathcal{O} \subset \omega \times (-L,L), $$
this is the case for any $\Gamma_* \supset \{x \in \partial \mathcal{O},\ (x-x_0)\cdot\nu_\mathcal{O}(x) \geq 0 \}$,
where $\nu_\mathcal{O}$ is the outward unit normal to $\partial \mathcal{O}$ and $x_0$ is arbitrary in $\R^n \setminus \overline{\mathcal{O}}$.
The same remark holds true for $\gamma_* \supset \{x'\in\partial \omega,\ (x'-x_0')\cdot\nu'(x') \geq 0 \}$, where
$x_0'$ is arbitrarily fixed in $\R^{n-1} \setminus \overline{\omega}$.

Theorem \ref{thmpr} immediately entails the followig uniqueness result.

\begin{follow}
Under the conditions of Theorem \ref{thmpr}, it holds true for all $q_j \in Q_M(p)$, $j=1,2$, that any of the two following assumptions
$$ 
\left\{
\begin{array}{cl} \partial_\nu u_1'(t,x) = \partial_\nu u_2'(t,x) & \mbox{for a.e.}\ (t,x) \in (0,T) \times \Gamma_* \\
u_1(t,x)=u_2(t,x) &  \mbox{for a.e.}\ (t,x) \in (0,T) \times \omega \times \mathcal{S}_L,
\end{array} 
\right.
$$
or
$$ \partial_\nu u_1'(t,x)= \partial_\nu u_2'(t,x)\ \mbox{for a.e.}\ (t,x) \in (0,T) \times \gamma_* \times \R,$$
yields $q_1=q_2$.
\end{follow}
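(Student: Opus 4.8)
The plan is to read the corollary off directly from the two stability estimates \eqref{eq1a} and \eqref{eqa2} supplied by Theorem \ref{thmpr}, both of which already bound $\|q_1-q_2\|_{{\rm L}^2(\Omega)}$ from above by a norm of the difference of the observations. Since the corollary is stated under the full set of hypotheses of Theorem \ref{thmpr}, in particular $q_j \in \Q_M(p)$ obey \eqref{eq1b}, the initial datum $u_0$ satisfies \eqref{eqinitiale}, and assumption \eqref{as} holds, both inequalities are available with finite constants $C$ and $C'$. The whole argument therefore reduces to observing that, under each of the two alternative sets of assumptions, the right-hand side of the corresponding inequality vanishes identically.

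First I would dispose of the second alternative, which is the cleaner of the two. The hypothesis $\partial_\nu u_1' = \partial_\nu u_2'$ for a.e. $(t,x) \in (0,T) \times \gamma_* \times \R$ says exactly that $\partial_\nu u_1' - \partial_\nu u_2'$ is the zero element of ${\rm L}^2((0,T) \times \gamma_* \times \R)$, so the single term on the right-hand side of \eqref{eqa2} is zero. Hence \eqref{eqa2} yields $\|q_1-q_2\|_{{\rm L}^2(\Omega)} \leq C' \cdot 0 = 0$, that is $q_1 = q_2$ a.e. in $\Omega$.

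For the first alternative I would proceed in the same spirit, checking that \emph{both} terms on the right-hand side of \eqref{eq1a} vanish. The assumption $\partial_\nu u_1' = \partial_\nu u_2'$ on $(0,T) \times \Gamma_*$ kills the Neumann term $\|\partial_\nu(u_1'-u_2')\|_{{\rm L}^2((0,T)\times\Gamma_*)}$ precisely as above. The only point deserving a little care, and the closest thing here to an obstacle, is the internal term $\|u_1-u_2\|_{{\rm H}^1(0,T;{\rm H}^1(\omega \times \mathcal{S}_L))}$: the hypothesis provides only the pointwise (a.e.) identity $u_1 = u_2$ on $(0,T)\times\omega\times\mathcal{S}_L$, whereas \eqref{eq1a} involves a Sobolev norm of the difference. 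I would resolve this by invoking the regularity built into Theorem \ref{thmpr}, namely that each $u_j$ lies in $C^1([0,T];{\rm H}^2(\Omega)\cap{\rm H}_0^1(\Omega))$, so that the restrictions of $u_1$ and $u_2$ to the time-space cylinder $(0,T)\times\omega\times\mathcal{S}_L$ belong to ${\rm H}^1(0,T;{\rm H}^1(\omega\times\mathcal{S}_L))$; the a.e. vanishing of $u_1-u_2$ on this relatively open set then forces the vanishing of all the time and space derivatives entering the ${\rm H}^1(0,T;{\rm H}^1(\omega\times\mathcal{S}_L))$-norm, and hence of the norm itself. With both terms equal to zero, \eqref{eq1a} gives $\|q_1-q_2\|_{{\rm L}^2(\Omega)}=0$, i.e. $q_1=q_2$, which completes the proof.
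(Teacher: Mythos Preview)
Your proposal is correct and matches the paper's approach: the paper states the corollary as an immediate consequence of Theorem \ref{thmpr} without giving a separate proof, and your argument---substituting the hypotheses into \eqref{eq1a} and \eqref{eqa2} so that the right-hand sides vanish---is exactly the intended reading. Your remark on why the a.e.\ identity $u_1=u_2$ forces the ${\rm H}^1(0,T;{\rm H}^1(\omega\times\mathcal{S}_L))$-norm to vanish is a reasonable clarification, fully justified by the $C^1([0,T];{\rm H}^2(\Omega))$ regularity built into the theorem.
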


\subsection{Outline}

The paper is organized as follows. Section \ref{sec-direct} deals with the direct problem associated to \eqref{eq1}. Namely \S \ref{sec-exiuni} gathers existence and uniqueness results for the solution to the dynamic Schr\"odinger equation in the infinite domain $\Omega$ and \S \ref{sec-linpro} is devoted to the study of the direct problem for the linearized system associated to \eqref{eq1}. In \S \ref{sec-timesym} the corresponding solution is suitably extended to a function of $[-T,T] \times \Omega$ which is continuous with respect to the time variable $t$. This is required by the method used in the proof of the stability inequalities \eqref{eq1a} and \eqref{eqa2}, which is the purpose of Section \ref{sec-si}. It is by means of the global Carleman estimate for the Schr\"odinger equation in a bounded domain (resp., in an unbounded waveguide) stated in Proposition \ref{pr-Carl} (resp., Proposition \ref{pr-unboundedCarl}) for \eqref{eq1a} (resp., for \eqref{eqa2}). Finally \S \ref{sec-proof} contains the completion of the proof of \eqref{eq1a}-\eqref{eqa2}.

\section{Analysis of the direct problem}
\label{sec-direct}
In this section we establish some existence, uniqueness and regularity properties of the solution to \eqref{eq1} needed for the analysis of the inverse problem
in section 3.
\subsection{Existence and uniqueness results}
\label{sec-exiuni}
This subsection gathers two existence and uniqueness results for \eqref{eq1}. We start by recalling from \cite{CKS}[Proposition 2.1] the following:
\begin{proposition}
\label{pr0} Let $M>0$. Then for all $q \in \mathcal{Q}_M$, $v_0\in {\rm H}_0^1(\Omega )\cap {\rm H}^2(\Omega )$ and $f\in
W^{1,1}(0,T;\rm{L}^2(\Omega ))$, there is a unique solution $v \in Z_0:=C([0,T]; {\rm H}_0^1(\Omega )\cap {\rm H}^2(\Omega
))\cap C^1([0,T]; {\rm L}^2(\Omega) )$ to the boundary value problem
$$
\left\{
\begin{array}{ll}
-i v' -\Delta v +q v = f, &{\rm in}\ Q,
\\
v(0,x )=v_0, & x \in \Omega ,
\\
v(t,x)=0, & (t,x) \in \Sigma.
\end{array}
\right.
$$
Moreover we have 
$$
\|v\|_{Z_0}\leq C\left(\|v_0\|_{{\rm H}^2(\Omega )}+\|f\|_{W^{1,1}(0,T; {\rm L}^2(\Omega ))}\right),
$$
for some constant $C>0$ depending only on $\omega$, $T$ and $M$.
\end{proposition}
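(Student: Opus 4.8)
The plan is to recast the boundary value problem as an abstract Cauchy problem governed by a self-adjoint operator, and then to invoke Stone's theorem together with the standard regularity theory for inhomogeneous evolution equations. First I would introduce the (unbounded) operator $A := -\Delta + q$ acting in ${\rm L}^2(\Omega)$ with domain $D(A) := {\rm H}^1_0(\Omega) \cap {\rm H}^2(\Omega)$. Since the Dirichlet Laplacian on this domain is self-adjoint and nonnegative, and since $q \in \mathcal{Q}_M$ is real-valued and bounded, multiplication by $q$ is a bounded symmetric operator, so $A$ is self-adjoint with the same domain $D(A)$ (a bounded symmetric perturbation of a self-adjoint operator). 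The equation $-iv'-\Delta v + qv = f$ is then equivalent to the abstract Cauchy problem $v'(t) = -i A v(t) + i f(t)$, $v(0)=v_0$.

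Next, Stone's theorem furnishes a strongly continuous group of unitary operators $U(t):=e^{-itA}$, $t\in\R$, generated by $-iA$, and the solution is represented by the Duhamel formula
$$ v(t) = U(t) v_0 + i \int_0^t U(t-s) f(s)\, ds. $$
With $v_0 \in D(A)$ and $f \in W^{1,1}(0,T;{\rm L}^2(\Omega))$, the classical regularity theory for inhomogeneous abstract evolution equations ensures that this mild solution is in fact the unique classical solution, enjoying $v \in C([0,T]; D(A)) \cap C^1([0,T]; {\rm L}^2(\Omega))$. Identifying $D(A)$, equipped with its graph norm, with ${\rm H}^1_0(\Omega)\cap{\rm H}^2(\Omega)$ then yields $v \in Z_0$, and uniqueness follows from the injectivity of the group $U(t)$.

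For the quantitative bound I would argue by energy estimates. Unitarity of $U(t)$ gives immediately $\|v(t)\|_{{\rm L}^2(\Omega)} \leq \|v_0\|_{{\rm L}^2(\Omega)} + \|f\|_{{\rm L}^1(0,T;{\rm L}^2(\Omega))}$. Differentiating the equation in time, the function $v'$ solves the same equation with source $f'$ and initial datum $v'(0) = -iA v_0 + i f(0)$, so the same energy bound produces $\|v'(t)\|_{{\rm L}^2(\Omega)} \leq \|A v_0\|_{{\rm L}^2(\Omega)} + \|f(0)\|_{{\rm L}^2(\Omega)} + \|f'\|_{{\rm L}^1(0,T;{\rm L}^2(\Omega))}$; since $W^{1,1}(0,T)\hookrightarrow C([0,T])$ controls $\|f(0)\|_{{\rm L}^2(\Omega)}$ and $\|Av_0\|_{{\rm L}^2(\Omega)}\leq C\|v_0\|_{{\rm H}^2(\Omega)}$, the right-hand side is dominated by $C(\|v_0\|_{{\rm H}^2(\Omega)} + \|f\|_{W^{1,1}(0,T;{\rm L}^2(\Omega))})$. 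Finally, rewriting the equation as $-\Delta v = i v' - q v + f$ and applying the elliptic ${\rm H}^2$-regularity estimate $\|v(t)\|_{{\rm H}^2(\Omega)} \leq C(\|\Delta v(t)\|_{{\rm L}^2(\Omega)} + \|v(t)\|_{{\rm L}^2(\Omega)})$ converts the preceding ${\rm L}^2$ bounds on $v$ and $v'$ into the asserted control of $\|v\|_{Z_0}$.

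I expect the only genuinely delicate point to be this last elliptic estimate on the \emph{unbounded} cylinder $\Omega=\omega\times\R$: one must be sure the constant is uniform in the axial direction $x_n$. This uniformity follows from the translation invariance of $\Omega$ along $x_n$ together with the $C^2$-regularity of $\partial\omega$—for instance via a partial Fourier transform in $x_n$, which reduces the problem to the resolvent $(-\Delta_{x'}+\xi^2)^{-1}$ on the bounded cross-section $\omega$ with uniform-in-$\xi$ weighted bounds, or via a localization argument with a covering of $\Omega$ that is constant in $x_n$, reducing matters to the standard interior and boundary regularity estimates on a bounded reference piece. Everything else is textbook semigroup theory and poses no difficulty.
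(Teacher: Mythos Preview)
Your proposal is correct and follows the standard semigroup route: realize $-\Delta+q$ as a self-adjoint operator on ${\rm L}^2(\Omega)$ with domain ${\rm H}^1_0(\Omega)\cap{\rm H}^2(\Omega)$, apply Stone's theorem, use Duhamel plus the classical regularity result for $f\in W^{1,1}(0,T;{\rm L}^2)$ and $v_0\in D(A)$, and close with elliptic regularity. You have also correctly isolated the only nontrivial issue, namely the uniform ${\rm H}^2$-estimate on the unbounded cylinder, and your two suggested proofs (partial Fourier in $x_n$, or a translation-invariant covering) are both valid ways to obtain it.

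As for the comparison with the paper: there is nothing to compare. The paper does not prove this proposition at all; it simply recalls it from \cite{CKS}[Proposition 2.1]. Your argument is essentially the proof one would expect to find in that reference (self-adjoint generator, unitary group, Duhamel, elliptic regularity to identify $D(A)$ with ${\rm H}^1_0\cap{\rm H}^2$), so you have filled in exactly what the present paper outsources.
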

Let the space
\bel{eqd9a} 
\mathscr{X}_0:=\gamma_0(W^{2,2}(0,T;{\rm H}^2(\Omega ))),
\ee
be equipped with the following quotient norm
$$
\|g\|_{\mathscr{X}_0}=\inf\{ \|G\|_{W^{2,2}(0,T;{\rm H}^2(\Omega ))};\ G\in W^{2,2}(0,T;{\rm H}^2(\Omega ))\; \mbox{satisfies}\ \gamma_0 G=g \}.
$$
Evidently, each $g\in \mathscr{X}_0$ admits an extension $G_0 \in W^{2,2}(0,T; {\rm H}^2(\Omega ))$
verifying
$$
\|G_0 \|_{W^{2,2}(0,T; {\rm H}^2(\Omega ))}\leq 2\|g\|_{\mathscr{X}_0}.
$$
Further, put 
\bel{eqd10} 
\mathscr{L}:=\{(u_0,g)\in {\rm H}^2(\Omega )\times \mathscr{X}_0;\; u_0=g(0,\cdot )\
\mbox{on}\ \Gamma \}. 
\ee
Then, setting $u=v+G_0$, where $v$ is defined by Proposition \ref{pr0} for
$f=i \partial_t G_0 + \Delta G_0 - q G_0$ and $v_0= u_0-G_0(0,.)$, we obtain the following existence and uniqueness result, which is similar to \cite{CKS}[Corollary 2.1].

\begin{theorem}
\label{thm0} 
Let $M >0$. Then for all $q \in \mathcal{Q}_M$ and all $(u_0,g) \in \mathscr{L}$ there is a unique solution $u\in
Z:=C([0,T]; {\rm H}^2(\Omega ))\cap C^1([0,T]; {\rm L}^2(\Omega) )$ to the boundary value problem \eqref{eq1}. Moreover we have the following estimate
\bel{eqd9} 
\|u\|_Z\leq C(\|u_0\|_{{\rm H}^2(\Omega)}+\|g\|_{\mathscr{X}_0}), 
\ee 
where $C$ is some positive constant depending only on $\omega$, $T$ and $M$.
\end{theorem}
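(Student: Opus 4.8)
The plan is to reduce the inhomogeneous boundary value problem \eqref{eq1} to the homogeneous Dirichlet problem already handled by Proposition \ref{pr0}, via the classical device of subtracting a lifting of the boundary datum. Given $(u_0,g)\in\mathscr{L}$, I would first invoke the quotient-norm definition of $\mathscr{X}_0$ to select an extension $G_0\in W^{2,2}(0,T;{\rm H}^2(\Omega))$ of $g$, i.e. with $\gamma_0 G_0=g$, obeying $\|G_0\|_{W^{2,2}(0,T;{\rm H}^2(\Omega))}\leq 2\|g\|_{\mathscr{X}_0}$, exactly as recorded just before the statement. Then I would set $f:=i\partial_t G_0+\Delta G_0-qG_0$ and $v_0:=u_0-G_0(0,\cdot)$, let $v\in Z_0$ be the solution furnished by Proposition \ref{pr0} for this $f$ and $v_0$, and define $u:=v+G_0$. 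A direct substitution shows that the contributions of $G_0$ cancel: $-iu'-\Delta u+qu=(-iv'-\Delta v+qv)-f=0$, since $v$ solves $-iv'-\Delta v+qv=f$. The initial condition reads $u(0,\cdot)=v_0+G_0(0,\cdot)=u_0$, and because $v(t)\in{\rm H}_0^1(\Omega)$ for every $t$ we get $\gamma_0 u=\gamma_0 v+\gamma_0 G_0=g$. Thus $u$ solves \eqref{eq1}, and since $v\in Z_0\subset Z$ while $W^{2,2}(0,T;{\rm H}^2(\Omega))\hookrightarrow Z$, the regularity $u\in Z$ follows.

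The technical heart of the argument, and the part deserving the most care, is to verify that $v_0$ and $f$ meet the hypotheses of Proposition \ref{pr0}. For $v_0$ I would use the compatibility condition built into the definition of $\mathscr{L}$: since $u_0=g(0,\cdot)$ on $\Gamma$ and $\gamma_0 G_0(0,\cdot)=g(0,\cdot)$, the trace of $v_0$ on $\Gamma$ vanishes, so that $v_0\in{\rm H}_0^1(\Omega)\cap{\rm H}^2(\Omega)$ with $\|v_0\|_{{\rm H}^2(\Omega)}\leq\|u_0\|_{{\rm H}^2(\Omega)}+\|G_0(0,\cdot)\|_{{\rm H}^2(\Omega)}$, the last term being controlled by $\|G_0\|_{W^{2,2}(0,T;{\rm H}^2(\Omega))}$ through the embedding into $C([0,T];{\rm H}^2(\Omega))$. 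For $f$ I would check term by term that $f\in W^{1,1}(0,T;{\rm L}^2(\Omega))$: on the bounded interval $(0,T)$ the two linear terms $\partial_t G_0$ and $\Delta G_0$ are bounded by $\|G_0\|_{W^{2,2}(0,T;{\rm H}^2(\Omega))}$ via the embeddings $W^{2,2}(0,T;{\rm H}^2(\Omega))\hookrightarrow W^{1,1}(0,T;{\rm L}^2(\Omega))$ and the continuity of $\Delta$ from ${\rm H}^2(\Omega)$ into ${\rm L}^2(\Omega)$, while the potential term $qG_0$ is handled by the Leibniz rule using $q\in W^{2,\infty}(\Omega)$ with $\|q\|_{W^{2,\infty}(\Omega)}\leq M$, giving $\|qG_0\|_{W^{1,1}(0,T;{\rm L}^2(\Omega))}\leq C(M)\|G_0\|_{W^{2,2}(0,T;{\rm H}^2(\Omega))}$. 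Altogether $\|f\|_{W^{1,1}(0,T;{\rm L}^2(\Omega))}\leq C(M)\|G_0\|_{W^{2,2}(0,T;{\rm H}^2(\Omega))}$.

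Finally, the estimate \eqref{eqd9} would follow by combining the bound of Proposition \ref{pr0}, namely $\|v\|_{Z_0}\leq C(\|v_0\|_{{\rm H}^2(\Omega)}+\|f\|_{W^{1,1}(0,T;{\rm L}^2(\Omega))})$, with the two preceding estimates and $\|G_0\|_{W^{2,2}(0,T;{\rm H}^2(\Omega))}\leq 2\|g\|_{\mathscr{X}_0}$, using $\|u\|_Z\leq\|v\|_{Z_0}+\|G_0\|_Z$. For uniqueness I would take two solutions $u_1,u_2\in Z$ of \eqref{eq1} sharing the same data; their difference $w=u_1-u_2\in Z$ solves the homogeneous equation with $w(0,\cdot)=0$ and $\gamma_0 w=0$, hence $w(t)\in{\rm H}_0^1(\Omega)\cap{\rm H}^2(\Omega)$ for every $t$, so $w\in Z_0$, and the uniqueness part of Proposition \ref{pr0} applied with $v_0=0$ and $f=0$ forces $w=0$. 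I expect no serious obstacle beyond the careful bookkeeping of the regularity and compatibility verifications of the second paragraph, since Proposition \ref{pr0} already carries the genuine analytic weight of the construction.
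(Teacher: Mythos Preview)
Your proposal is correct and follows exactly the approach indicated in the paper: lift the boundary datum by the extension $G_0$ with $\|G_0\|_{W^{2,2}(0,T;{\rm H}^2(\Omega))}\leq 2\|g\|_{\mathscr{X}_0}$, set $u=v+G_0$ with $v$ given by Proposition~\ref{pr0} for $f=i\partial_t G_0+\Delta G_0-qG_0$ and $v_0=u_0-G_0(0,\cdot)$, and read off the estimate. The paper only sketches this reduction in a single sentence (referring to \cite{CKS}[Corollary 2.1] for details), whereas you have supplied the regularity, compatibility and uniqueness verifications explicitly; nothing in your write-up departs from the paper's route.
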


\subsection{Linearized problem}
\label{sec-linpro}
In this subsection we establish some useful regularity properties of the solution to the linearized problem associated to \eqref{eq1} by applying the existence and uniqueness results stated in \S \ref{sec-exiuni}. 

To this purpose, we let $p$, $M$, $q_j$, $j=1,2$, $u_0$, $g$ and $G$ be the same as in Theorem \ref{thmpr}. Since $G \in W^{2,2}(0,T;{\rm H}^2(\Omega ))$ and $g(0,
\cdot)= u_0$ on $\Gamma$, we have $(u_0, g) \in \mathscr{L}$.
Applying Theorem \ref{thm0} there is thus a
unique solution $u_j \in Z$, $j=1,2$, to the following system
\bel{eq2}
\left\{ \begin{array}{rcll} -i u'_j-\Delta u_j + q_j(x)u_j & = & 0, & {\rm in}\ Q,\\ 
u_j(0,x) &= & u_0(x), & x\in \Omega,\\
u_j(t,x)& = & g(t,x),& (t,x)\in \Sigma. \end{array}\right. \ee
Further, differentiating \eqref{eq2} with respect to $t$ for $j=2$, we obtain that $u'_2$ is solution to the
boundary value problem
\bel{eq3}
\left\{ \begin{array}{rcll} 
-i u_2''-\Delta u'_2 + q_2(x)u'_2 & = & 0, & {\rm in}\ Q,\\ 
u'_2(0,x) &= & \tilde{u}_0(x), & x\in \Omega,\\
u'_2(t,x)& = & g'(t,x),& (t,x)\in \Sigma, 
\end{array}\right. 
\ee 
where $\tilde{u}_0:= i (\Delta -q_2)u_0$.
Since $G'(t,x)= i (\Delta-p)(x)$ for a.e. $(t,x) \in Q$ by \eqref{G} then we have $G' \in W^{2,2}(0,T;{\rm H}^2(\Omega ))$ and we get
$g'(0,x)=\tilde{u}_0(x)$ for a.e. $x \in \Gamma$ from the identity $q_2=p$ on $\Gamma$.
Consequently
$(\tilde{u}_0, g') \in \mathscr{L}$. Therefore $u_2' \in Z $ from \eqref{eq3} and Theorem \ref{thm0}. As a
consequence we have
$$u_2 \in C^1([0,T]; {\rm H}^2(\Omega ))\cap C^2([0,T]; {\rm L}^2(\Omega) ). $$

Moreover, it follows from \eqref{eq2} that $u:=u_1-u_2 $ is solution to the linearized system 
\bel{eq4}
\left\{ \begin{array}{rcll} -i u'-\Delta u + q_1u & = & f, & {\rm in}\ Q,\\ 
u(0,x) &= & 0, & x\in \Omega,\\
u(t,x)& = & 0,& (t,x)\in \Sigma, 
\end{array} \right. 
\ee with $f:= (q_2-q_1) u_2 $. Since $u_2 \in Z$ and $
q_1-q_2 \in  {\rm L}^{\infty}(\Omega)$ then $f \in W^{1,1}(0,T;\rm{L}^2(\Omega ))$ so we find that $u \in Z_0$ by
Proposition \ref{pr0}.

Last, we deduce from \eqref{eq4} that $v:= u'$ satisfies
\bel{eq5}
\left\{ \begin{array}{rcll} 
-i v' -\Delta v + q_1v & = & f', & {\rm in}\ Q,\\ 
v(0,x) &= & i (q_2-q_1)(x) u_0(x), & x\in \Omega. \\
v(t,x)& = & 0,& (t,x)\in \Sigma, \end{array} \right. 
\ee 
with $f'= (q_2-q_1)u'_2 \in  W^{1,1}(0,T;\rm{L}^2(\Omega ))$.
Since $q_1=q_2$ on $\Gamma$ we get that $i(q_2-q_1)u_0 \in {\rm H}_0^1(\Omega )\cap {\rm H}^2(\Omega )$. Therefore 
$v= u'_1-u'_2 \in Z_0$ by Proposition \ref{pr0}. Bearing in mind that $u'_2 \in Z$ we deduce from this that $u'_1 \in Z$.
Summing up, we have obtained that
$$u_j \in C^1([0,T]; {\rm H}^2(\Omega ))\cap C^2([0,T]; {\rm L}^2(\Omega) ),\ j=1,2. $$

\subsection{Time symmetrization}
\label{sec-timesym}
This subsection is devoted to showing that the solution $v \in Z_0$ to \eqref{eq5} may be extended to a solution of the same system over the time span $[-T,T]$. 

To this end we extend $v $ (resp., $f'$) on $(-T,0) \times \Omega $ by setting
$v(t,x):=- \overline{v(-t,x)}$  (resp., $f'(t,x):=- \overline{f'(-t,x)}$) for $(t,x) \in (-T,0) \times \Omega$.
Since the initial conditions $v(0, \cdot) $ and $f'(0, \cdot)$  are purely complex valued according to \eqref{eq5}, 
the mappings $t \mapsto v(t,x)$ and $t \mapsto f'(t,x)$ are thus continuous at $t=0$ for
a.e. $x \in \Omega$. Therefore, we have $v \in C^0([-T,T]; {\rm H}_0^1(\Omega) \cap
{\rm H}^2(\Omega))\cap C^1([-T,T]; {\rm L}^2(\Omega) )$ and $f' \in W^{1,1}(-T,T;\rm{L}^2(\Omega ))$. Moreover $v$ is solution to \eqref{eq5} over the whole time span $(-T,T)$:
\bel{eq5b}
\left\{ \begin{array}{rcll} 
-i v' -\Delta v + q_1v & = & f', & {\rm in}\ Q_e:=(-T,T) \times \Omega,\\ 
v(0,x) &= & i (q_2-q_1)(x) u_0(x), & x \in \Omega. \\
v(t,x)& = & 0,& (t,x)\in \Sigma_e:= (-T,T) \times \Gamma. \end{array} \right. 
\ee 

\section{Carleman estimate for the dynamic Schr\"odinder operator}
\label{sec-carl}
The main purpose of this section is to derive a global Carleman estimate for the Schr\"odinger operator in an unbounded cylindrical domain $\mathcal{O} \times \R$, where $\mathcal{O}$ is a bounded domain of $\R^m$, $m \geq 1$, from the same type of results obtained for $\mathcal{O}$ by Baudouin and Puel in \cite{BP}[Proposition 3].

\subsection{What is known in a bounded domain} 
\label{sec-ce}
Given an arbitrary bounded domain $\mathcal{O} \supset \R^m$ with $C^2$ boundary, we consider the Schr\"odinger operator acting in 
$(C_0^{\infty})'((-T,T) \times \mathcal{O})$,
\bel{H} 
L_m := - {\rm i} \partial_t - \Delta,
\ee
where $\Delta$ denotes the Laplacian in $\mathcal{O}$.
Here we use the subscript $m$ to emphasize the fact that the operator $L_m$ acts in a subdomain of $(-T,T) \times \R^m$.

Next we introduce a function
$\tilde{\beta} \in {\rm C}^4(\overline{\mathcal{O}};\R_+)$ and an open subset $\Gamma_\mathcal{O}$ of $\partial \mathcal{O}$, satisfying the following conditions:
\begin{assumption}
\label{funct-beta}
\hfill \break \vspace*{-.5cm}
\begin{enumerate}[(i)]
\item $\exists C_0>0$ such that the estimate $|\nabla \tilde{\beta}(x)| \geq C_0$ holds for all $x \in \mathcal{O}$;
\item ${\partial}_{\nu} {\tilde{\beta}}(x) := \nabla {\tilde{\beta}}(x). \nu (x) < 0$ for all $x \in
    \partial \mathcal{O} \backslash \Gamma_\mathcal{O}$, where $\nu$ is the outward unit normal vector to $\partial \mathcal{O}$;
\item $\exists \Lambda_1>0$, $\exists \epsilon>0$  such that we have $\lambda |\nabla  \tilde{\beta}(x) \cdot
    \zeta|^2 + D^2 \tilde{\beta} (x,\zeta, \zeta) \geq \epsilon  |\zeta|^2$ for all $\zeta \in  \R^m$, $x \in \mathcal{O}$ and
    $\lambda  > \Lambda_1$, where $D^2 \tilde{\beta}(x):=\left( \frac{\partial^2 \tilde{\beta}(x)}{\partial x_i \partial x_j} \right)_{1 \leq i,j \leq m}$ and $D^2 \tilde{\beta} (x,\zeta, \zeta)$ denotes the $\R^m$-scalar product of $D^2 \tilde{\beta}(x) \zeta$ with $\zeta$.
\end{enumerate}
\end{assumption}
Notice that there are actual functions $\tilde{\beta}$ verifying Assumption \ref{funct-beta}, such as $\mathcal{O} \ni x \mapsto | x - x_0 |^2$, where $x_0$ is arbitrary in $\R^m \setminus \overline{\mathcal{O}}$ and $\Gamma_{\mathcal{O}} \supset\{ x \in \partial \mathcal{O},\ (x-x_0) \cdot \nu(x) \geq 0 \}$.

Further we put
\bel{defbeta} 
\beta:= \widetilde{\beta}+K,\ {\rm where}\ K:= r \|\tilde{\beta}\|_{\infty}\ {\rm for\
some}\ r>1, 
\ee 
and define the two following weight functions for $\lambda>0$: 
\bel{defphieta}
\varphi(t,x):=\frac{{\rm e}^{\lambda  \beta(x)}}{(T+t)(T-t)}\ {\rm and}\ \eta(t,x):=\frac{{\rm e}^{2\lambda K} -{\rm
e}^{\lambda \beta(x)}}{(T+t)(T-t)},\ (t,x) \in (-T,T) \times \mathcal{O}. 
\ee 
Finally, for all $s>0$ we introduce the two following
operators acting in $(C_0^{\infty})'((-T,T) \times \mathcal{O})$:
\bel{M1} 
M_{1,m} : = {\rm i} \partial_t +
\Delta  + s^2 |\nabla \eta |^2\ {\rm and}\ M_{2,m} : = {\rm i} s \eta' + 2 s \nabla \eta . \nabla  + s (\Delta \eta). 
\ee
It is easily seen that $M_{1,m}$ (resp., $M_{2,m}$) is the adjoint (resp., skew-adjoint) part of the operator ${\rm e}^{-s \eta} L_m {\rm e}^{s \eta}$,
where $L_m$ is given by \eqref{H}. 

Having said that we may now state the following global Carleman estimate, borrowed from \cite{BP}[Proposition 3], which is the main tool for the proof of Theorem \ref{thmpr}.

\begin{pr}
\label{pr-Carl} 
Let $\beta$ be given by \eqref{defbeta}, where $\tilde{\beta} \in {\rm C}^4(\overline{\mathcal{O}};\R_+)$
fulfills Assumption \ref{funct-beta}, let $\varphi$ and $\eta$ be as in \eqref{defphieta}, and let $L_m$, $M_{1,m}$ and
$M_{2,m}$ be defined by \eqref{H}-\eqref{M1}. Then we may find $s_0>0$ and constant
$C>0$, depending only on $T$, $\mathcal{O}$ and $\Gamma_\mathcal{O}$, such
that the estimate 
\bea
& & s  \| {\rm e}^{-s \eta}  \nabla w  \|_{{\rm L}^2((-T,T) \times \mathcal{O})}^2  +s^3  \| {\rm e}^{-s \eta} w \|_{{\rm L}^2((-T,T) \times \mathcal{O})}^2  + \sum_{j=1,2} \|  M_{j,m}  e^{-s \eta}w  \|_{{\rm L}^2(  (-T,T) \times \mathcal{O} )}^2 \nonumber  \\
& \leq  & C  \left(  s
\| {\rm e}^{-s \eta} \varphi^{1/2}  ( \partial_{\nu} \beta)^{1/2}   \partial_{\nu} w  \|_{{\rm L}^2( (-T,T) \times \Gamma_\mathcal{O})}^2
+  \| {\rm e}^{-s \eta} L_m w \|_{{\rm L}^2((-T,T) \times \mathcal{O})}^2 \right), \label{Carl} 
\eea
holds for any real number $s \geq s_0$ and any function $w \in {\rm L}^2(-T,T;  {\rm H}^1_0( \mathcal{O}) )$
satisfying $L_m w \in {\rm L}^2((-T,T) \times \mathcal{O})$ and $\partial_{\nu} w \in {\rm L}^2(-T,T;{\rm
L}^2(\Gamma_\mathcal{O}))$.
\end{pr}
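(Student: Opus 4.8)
Since the displayed inequality is exactly \cite{BP}[Proposition 3], the quickest route is to check that $\tilde\beta$, the weights $\varphi,\eta$ and the subboundary $\Gamma_{\mathcal O}$ satisfy verbatim the hypotheses required there and invoke it directly; the admissible example $x\mapsto|x-x_0|^2$ recorded after Assumption \ref{funct-beta} shows that these hypotheses are non-vacuous. Should one wish to reprove it, the plan is the classical conjugation-and-splitting argument. I would set $v:=e^{-s\eta}w$, which belongs to $L^2(-T,T;{\rm H}^1_0(\mathcal O))$ and vanishes on $\Sigma$; by the very definition \eqref{M1} of $M_{1,m}$ and $M_{2,m}$ as the self-adjoint and skew-adjoint parts of $e^{-s\eta}L_m e^{s\eta}$, one has $e^{-s\eta}L_m w = M_{1,m}v+M_{2,m}v$. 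Expanding the square in $L^2((-T,T)\times\mathcal O)$ yields
\[
\|e^{-s\eta}L_m w\|^2=\|M_{1,m}v\|^2+\|M_{2,m}v\|^2+2\,\Pre{\langle M_{1,m}v,M_{2,m}v\rangle},
\]
so that the whole problem reduces to bounding the cross term from below by the three interior quantities on the left of \eqref{Carl}, up to the boundary observation on $\Gamma_{\mathcal O}$.

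The core of the argument is the explicit computation of $2\,\Pre{\langle M_{1,m}v,M_{2,m}v\rangle}$, obtained by expanding it into the pairings of the individual terms of $M_{1,m}$ and $M_{2,m}$ and integrating by parts in both $t$ and $x$. The temporal integrations by parts produce no contribution at $t=\pm T$ because $\eta$ and its derivatives blow up there while $v$ stays integrable, which is the reason for the singular time weight in \eqref{defphieta}. The spatial integrations by parts generate, besides surface integrals on $\partial\mathcal O$, the two families of dominant interior terms: a gradient contribution of the form $s\int\varphi\bigl(\lambda|\nabla\beta\cdot\nabla v|^2+D^2\beta(\nabla v,\nabla v)\bigr)$ and a zeroth-order contribution of the form $s^3\int\varphi^3|\nabla\beta|^4|v|^2$, together with a host of remainder terms carrying strictly lower powers of $s$.

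Positivity is then forced by Assumption \ref{funct-beta}. Applying (iii) with $\zeta$ equal to the real and imaginary parts of $\nabla v$ turns the gradient contribution into a positive multiple of $s\int\varphi|\nabla v|^2$, while (i), through $|\nabla\tilde\beta|\geq C_0$, turns the zeroth-order contribution into a positive multiple of $s^3\int\varphi^3|v|^2$. One first fixes $\lambda$ large enough to activate this pseudoconvexity, and only afterwards chooses $s_0$ so large that for $s\geq s_0$ the lower-order remainders are absorbed into these two positive terms. As for the boundary, the surface integrals reduce to $s\int_{\partial\mathcal O}\varphi\,\partial_\nu\beta\,|\partial_\nu v|^2$ because $v|_{\partial\mathcal O}=0$ kills all tangential derivatives; by (ii) the part over $\partial\mathcal O\setminus\Gamma_{\mathcal O}$ carries the favorable sign $\partial_\nu\beta<0$ and may be discarded, leaving on the right-hand side only the integral over $\Gamma_{\mathcal O}$. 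Returning to $w$ via $\partial_\nu v=e^{-s\eta}\partial_\nu w$ on $\Sigma$ (valid since $w=0$ there) recovers precisely the observation term of \eqref{Carl}.

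I expect the main obstacle to be the bookkeeping of the cross-term expansion, in particular the terms generated by the time dependence of $\eta$ --- the pairing of the ${\rm i}s\eta'$ piece of $M_{2,m}$ with ${\rm i}\partial_t$ and with $s^2|\nabla\eta|^2$ in $M_{1,m}$ --- and the disciplined tracking of the two large parameters $\lambda$ and $s$, so that every remainder term is provably of lower order and gets swallowed once $s\geq s_0$. This ordering of the limits, $\lambda$ first and $s$ afterwards, is where sign errors and spurious $\lambda$-dependence most easily creep in.
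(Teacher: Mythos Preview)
Your proposal is correct and matches the paper's treatment exactly: the paper does not reprove the estimate at all but simply states that ``the proof of Proposition \ref{pr-Carl} can be found in \cite{BP}[section 2]'', which is precisely the ``quickest route'' you identify first. Your subsequent sketch of the conjugation-and-splitting argument is a faithful outline of how \cite{BP} actually proceeds, but it goes beyond what the present paper provides.
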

The proof of Proposition \ref{pr-Carl} can be found in \cite{BP}[section 2].

\subsection{Extension to the case of unbounded cylindrical domains}

Let $\omega$ be  $\Omega=\omega \times \R$ be the same as in \S \ref{sec-intro}. We use the same notation as in section \ref{sec-intro} and section \ref{sec-direct}, i.e. every point $x \in \Omega$ is written $x=(x',x_n)$ with $x'=(x_1,\ldots,x_{n-1}) \in \omega$, and we note $Q_e=(-T,T) \times \Omega$. We shall now derive from \ref{pr-Carl} a global Carleman estimate for the Schr\"odinger operator
\bel{Ln}
L_n :=-{\rm i} \partial_t - \Delta, 
\ee
acting in $(C_0^{\infty})'((-T,T) \times \Omega)$. Here $\Delta :=
\Delta_{x'} +  \partial ^2_{x_n}$ where $\Delta_{x'} := \Sigma_{j=1}^{n-1} \partial ^2_{x_j}$ is the Laplacian in $\omega$.
where $\nabla_{x'}$ (resp., $\Delta_{x'}$) stands for the gradient (resp., the Laplacian) operator w.r.t. $x' \in \omega$.

\begin{proposition}
\label{pr-unboundedCarl} 
For $n \geq 2$ fixed, let $\tilde{\beta}$ and $\gamma_*:=\Gamma_\mathcal{O}$ obey
Assumption \ref{funct-beta} with $\mathcal{O} = \omega$ and $m= n-1$, and let $\beta$ be given by \eqref{defbeta}. Extend $\beta$ to $\Omega=\omega \times \R$ by setting
$\beta(x)=\beta(x')$ for all $x=(x',x_n) \in \Omega$. Define $\varphi$ and $\eta$ (resp., the operators $M_{j,n}$ for $j=1,2$) by \eqref{defphieta} (resp., \eqref{M1}), upon substituting $\Omega$ for $\mathcal{O}$ and $n$ for $m$.
Then there are two constants $s_0>0$ and $C>0$, depending only on $T$, $\omega$ and $\gamma_*$,
such that the estimate
\bea 
& & s  \|  {\rm e}^{-s \eta}  \nabla_{x'} v   \|_{{\rm L}^2(Q_e)}^2
+s^3  \|
 {\rm e}^{-s \eta} v  \|_{{\rm L}^2(Q_e)}^2  + \sum_{j=1,2} \|   M_{j,n}  {\rm e}^{-s \eta} v  \|_{{\rm L}^2( Q_e )}^2
 \nonumber  \\
& \leq  & C  \left(  s \|   {\rm e}^{-s \eta} \varphi^{1/2}  ( \partial_{\nu} \beta)^{1/2}
\partial_{\nu}  v   \|_{{\rm L}^2( (-T,T) \times \gamma_* \times \R )}^2+  \|   {\rm e}^{-s \eta}   L_n v   \|_{{\rm L}^2(Q_e )}^2 \right),
\label{ec-inf}
\eea
holds for all $s \geq s_0$ and any function $v \in {\rm L}^2(-T,T;  {\rm H}^1_0( \Omega ) ) $ verifying $ L_n v \in
{\rm L}^2(Q_e)$ and $\partial_{\nu} v \in {\rm L}^2(-T,T;{\rm L}^2(\gamma_* \times \R ))$.
\end{proposition}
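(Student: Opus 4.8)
The plan is to reduce the unbounded problem on $\Omega=\omega\times\mathbb{R}$ to the bounded cross-section $\omega$ by a partial Fourier transform $\mathcal{F}=\mathcal{F}_{x_n}$ in the axial variable, and then to invoke Proposition \ref{pr-Carl} with $m=n-1$ and $\mathcal{O}=\omega$, \emph{uniformly} in the Fourier parameter $\xi$. The crucial structural fact is that the weights $\beta$, $\varphi$, $\eta$ depend only on $(t,x')$ (recall $\beta(x)=\beta(x')$), so the multiplier $e^{-s\eta}$ and the factor $\varphi^{1/2}$ commute with $\mathcal{F}$; moreover, by \eqref{normale2} the normal derivative on $\Gamma=\partial\omega\times\mathbb{R}$ reduces to the transverse one $\partial_{\nu'}$, whence the boundary weight $\varphi^{1/2}(\partial_\nu\beta)^{1/2}$ is again $x_n$-independent and commutes with $\mathcal{F}$.

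I would first prove the estimate for $v$ smooth and compactly supported in $x_n$, the general case following by density together with a Fubini argument guaranteeing that $\hat v(\cdot,\cdot,\xi)\in L^2(-T,T;H^1_0(\omega))$ with $\partial_{\nu'}\hat v(\cdot,\cdot,\xi)\in L^2((-T,T)\times\gamma_*)$ for a.e.\ $\xi$. Applying $\mathcal{F}$ turns $\partial_{x_n}^2$ into $-\xi^2$, so that for a.e.\ fixed $\xi$ the function $\hat v(\cdot,\cdot,\xi)$ solves the $(n-1)$-dimensional equation $(L_{n-1}+\xi^2)\hat v=\widehat{L_n v}$ on $(-T,T)\times\omega$. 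Writing $W:=e^{-s\eta}v$ and $\hat W=e^{-s\eta}\hat v$, and using that $\eta$ is independent of $x_n$, one checks from \eqref{M1} that $\widehat{M_{2,n}W}=M_{2,n-1}\hat W$ while $\widehat{M_{1,n}W}=(M_{1,n-1}-\xi^2)\hat W$, where $M_{j,n-1}$ denote the operators of \eqref{M1} relative to $\omega$.

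\textbf{The main obstacle} is the spectral term $\xi^2$: it is a real, nonnegative zeroth-order coefficient which is \emph{unbounded} as $|\xi|\to\infty$, so it cannot simply be moved into the source $\widehat{L_n v}$ and absorbed on the left-hand side of \eqref{Carl} (the only available room is $s^3\|e^{-s\eta}\hat v\|^2$, and no fixed $s$ dominates $\xi^4$ for all $\xi$). The way around this is to keep $\xi^2$ inside the self-adjoint operator, writing the conjugated source as $\widehat{e^{-s\eta}L_n v}=(M_{1,n-1}-\xi^2)\hat W+M_{2,n-1}\hat W$, and to revisit the core of the computation of \cite{BP} underlying Proposition \ref{pr-Carl}. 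That estimate rests on bounding the cross term $2\,\mathrm{Re}\langle M_{1,n-1}\hat W,M_{2,n-1}\hat W\rangle$ from below by the Carleman energy minus a boundary contribution. Since $M_{2,n-1}$ is skew-adjoint, the constant shift contributes $-2\xi^2\,\mathrm{Re}\langle\hat W,M_{2,n-1}\hat W\rangle=0$, so this decisive lower bound is \emph{unaffected} by $\xi$. Hence the estimate \eqref{Carl} holds for $L_{n-1}+\xi^2$ with $s_0$ and $C$ independent of $\xi$; the term $\|(M_{1,n-1}-\xi^2)\hat W\|^2=\|\widehat{M_{1,n}W}\|^2$ is retained on the left, and the source on the right is exactly $\|\widehat{e^{-s\eta}L_n v}\|^2$.

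\textbf{Reassembly.} Finally I would integrate this $\xi$-uniform estimate over $\xi\in\mathbb{R}$ and apply Plancherel. Because $e^{-s\eta}$ and the boundary weight commute with $\mathcal{F}$, each squared $L^2((-T,T)\times\omega)$ norm integrated in $\xi$ becomes the corresponding squared $L^2(Q_e)$ norm: the gradient term yields $\|e^{-s\eta}\nabla_{x'}v\|^2_{L^2(Q_e)}$, the zeroth-order term yields $s^3\|e^{-s\eta}v\|^2_{L^2(Q_e)}$, the terms $\|\widehat{M_{j,n}W}\|^2$ yield $\|M_{j,n}e^{-s\eta}v\|^2_{L^2(Q_e)}$ for $j=1,2$, the source yields $\|e^{-s\eta}L_n v\|^2_{L^2(Q_e)}$, and the boundary term reassembles (via \eqref{normale2}) into $\|e^{-s\eta}\varphi^{1/2}(\partial_\nu\beta)^{1/2}\partial_\nu v\|^2$ over $(-T,T)\times\gamma_*\times\mathbb{R}$. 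This gives \eqref{ec-inf} with constants depending only on $T$, $\omega$ and $\gamma_*$, and the density argument of the first step removes the temporary smoothness and compact-support assumptions on $v$.
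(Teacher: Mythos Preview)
Your approach is correct but differs from the paper's in an interesting way. The paper does \emph{not} take the Fourier transform and then wrestle with the unbounded spectral parameter $\xi^2$; instead it conjugates by the free axial propagator $U_t:=e^{-it\partial_{x_n}^2}=\F_{x_n}^{-1}e^{itk^2}\F_{x_n}$. Setting $w:=U_t v$, the commutator identity $[i\partial_t,U_t]=\partial_{x_n}^2U_t$ gives $L_{n-1}w(\cdot,x_n)=U_t(L_n v)(\cdot,x_n)$ for a.e.\ $x_n$, so the $\partial_{x_n}^2$ term is \emph{removed} rather than transformed into a potential. Since $\eta,\varphi,\beta$ are independent of $x_n$, $U_t$ commutes with $e^{-s\eta}$, $\nabla_{x'}$, and (by \eqref{M1n}) with $M_{2,n}$, while $M_{1,n-1}(e^{-s\eta}w)=U_tM_{1,n}(e^{-s\eta}v)$. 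One then applies Proposition \ref{pr-Carl} \emph{as a black box} to $w(\cdot,x_n)$ for each fixed $x_n$, integrates in $x_n$, and uses unitarity of $U_t$ on $L^2(\Omega)$ (and on $L^2(\gamma_*\times\R)$ for the boundary term) to return to $v$.

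Your route and the paper's are in fact Fourier duals: on the Fourier side the paper's trick is precisely the gauge change $\hat v\mapsto e^{itk^2}\hat v$, which absorbs a constant real potential into a time-dependent phase for the Schr\"odinger equation. What this buys is that Proposition \ref{pr-Carl} can be quoted verbatim, with no need to reopen \cite{BP} and verify that the cross term $2\,\mathrm{Re}\langle M_{1,n-1}\phi,M_{2,n-1}\phi\rangle$ is unaffected by a real constant shift of $M_{1,n-1}$. Your argument for that point (skew-adjointness of $M_{2,n-1}$ on $L^2(-T,T;H_0^1(\omega))$ forcing $\mathrm{Re}\langle\phi,M_{2,n-1}\phi\rangle=0$) is correct and is a perfectly good alternative; it has the conceptual merit of showing directly why the axial direction is ``free'', at the price of being less self-contained since it relies on the internal structure of the proof in \cite{BP} rather than on its statement alone.
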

\begin{proof}
Since the function $\beta$ does not depend on the infinite variable $x_n$ then the same is true for $\varphi$ and $\eta$ according to \eqref{defphieta}. Therefore we have
\bel{M1n}
M_{1,n}  = {\rm i} \partial_t + \Delta
+ s^2 |\nabla_{x'} \eta |^2\ {\rm and}\ M_{2,n} = {\rm i} s \eta' + 2 s (\nabla_{x'} \eta) \cdot \nabla_{x'}  + s
(\Delta_{x'} \eta),
\ee
We denote by $\F_{x_n}$ the partial Fourier with respect to $x_n$, i.e. 
$$ (\F_{x_n} \psi)(x',k) = \widehat{\psi}(x',k) := \frac{1}{(2 \pi)^{1 \slash 2}} \int_{\R} e^{-i k x_n} \psi(x',x_n) d x_n,\ \psi \in {\rm L}^2(\Omega),\ x' \in \omega,\ k \in \R, $$
and consider the unitary group $U_t= e^{-i t \partial^2_{x_n}}$ in ${\rm L}^2(\Omega)$, defined by 
$$ U_t := \F_{x_n}^{-1}  e^{i t k^2} \F_{x_n},\ t \in \R, $$
where $\F_{x_n}^{-1}$ is the inverse transform to $\F_{x_n}$ and $e^{itk^2}$ stands for the multiplier by the corresponding complex number. Therefore we have
\bel{exp}   
(U_t \psi ) (x) = e^{-i t \partial^2_{x_n}} \psi (x) :=  \F^{-1}_{x_n}  \left( e^{itk^2} \widehat{\psi}(  \cdot, k ) \right) (x),\  \psi  \in {\rm L}^2
(\Omega),\ x \in \Omega,\ t \in \R.
\ee
We notice from \eqref{exp} that 
\bel{comm1}
[ i \partial_t , U_t ] = \partial_{x_n}^2 U_t = U_t \partial_{x_n}^2,\ t \in \R,
\ee
since $\partial^2_{x_n}$ commutes with $U_t$, and that
\bel{comm2}
[ \partial_{x_j}, U_t ] = [ e^{-s \eta}, U_t ] = 0,\ j=1,\ldots,n-1\ \mbox{and}\ s, t \in \R,\ 
\ee
since $\eta$ does not depend on $x_n$.
Let us now introduce the function
\bel{def-w}
w(t, x):= U_t v(t,x),\ (t,x) \in Q_e.
\ee
Then we have $w(\cdot, x_n) \in {\rm L}^2(-T,T;  {\rm H}^1_0(\omega) )$ and
$\partial_{\nu'} w(\cdot,x_n)  \in {\rm L}^2(-T,T;{\rm
L}^2(\gamma_*))$ for a.e. $x_n \in \R$.
Further, upon differentiating \eqref{def-w} w.r.t. $t$, we deduce from \eqref{comm1} that
\bel{def-dtw}
i \partial_t w 
= U_t(i \partial_t v + \partial^2_{x_n} v ).
\ee
This and the first commutator identity in \eqref{comm2} yields
$$
(- i\partial_t - \Delta_{x'}) w(t,x) = U_t L_n v(t, x),\ (t,x) \in Q_e,
$$
which entails
\bel{Lnminonew}
L_{n-1} w(\cdot,x_n)=U_t L_n v(\cdot,x_n),\ x_n \in \R.
\ee
As a consequence we have $L_{n-1} w(\cdot,x_n) \in {\rm L}^2((-T,T) \times \omega)$ for a.e. $x_n \in \R$, whence
\bea
& & s  \| {\rm e}^{-s \eta}  \nabla_{x'} w(\cdot,x_n)   \|_{{\rm L}^2((-T,T) \times \omega)}^2
+s^3  \| {\rm e}^{-s \eta}  w(\cdot,x_n)   \|_{{\rm L}^2((-T,T) \times \omega)}^2 \nonumber \\
& & + \sum_{j=1,2} \|  M_{j,n-1} e^{-s \eta} w(\cdot,x_n)   \|_{{\rm L}^2(  (-T,T) \times \omega  )}^2 \label{e1} \\
& \leq  & C  \left(  s \| {\rm e}^{-s \eta} \varphi^{1/2}  ( \partial_{\nu} \beta)^{1/2}   \partial_{\nu'}  w(\cdot,x_n)    \|_{{\rm
L}^2( (-T,T) \times \gamma_*)}^2 +  \| {\rm e}^{-s \eta} L_{n-1}   w(\cdot,x_n)  \|_{{\rm L}^2((-T,T) \times \omega)}^2
\right), \nonumber
\eea
for every $s \geq s_0$, by Proposition \ref{pr-Carl}.

The next step of the proof involves noticing from \eqref{comm2}--\eqref{def-dtw} that
\bel{co-w}
e^{-s \eta} w(\cdot,x_n)=U_t e^{-s \eta} v(\cdot,x_n),\ e^{-s \eta} \nabla_{x'} w(\cdot,x_n)=U_t e^{-s \eta} \nabla_{x'} v(\cdot,x_n),
\ee
and, with reference to \eqref{M1n}, that
\bel{co-M}
M_{j,n-1} e^{-s \eta} w(\cdot,x_n)=U_t M_{j,n} e^{-s \eta} v(\cdot,x_n),\ j=1,2,
\ee
for each $t \in (-T,T)$ and a.e. $x_n \in \R$. Similarly, we get 
\bel{co-L}
{\rm e}^{-s \eta} L_{n-1}   w(\cdot,x_n)= U_t {\rm e}^{-s \eta} L_n v(\cdot, x_n),\ t \in (-T,T),\ x_n \in \R,\
\ee
by combining the second identity of \eqref{comm2} with \eqref{Lnminonew}. Finally,
bearing in mind that $\varphi$ and $\beta$ are independendent of $x_n$, we deduce from \eqref{normale}-\eqref{normale2} and the second part of \eqref{comm2} that
\bel{co-trace}
{\rm e}^{-s \eta} \varphi^{1/2}  ( \partial_{\nu} \beta)^{1/2}   \partial_{\nu'} w(\cdot, x_n) = U_t {\rm e}^{-s \eta} \varphi^{1/2}  ( \partial_{\nu} \beta)^{1/2}   \partial_{\nu} v (\cdot, x_n),\ t \in (-T,T),\ x_n \in \R.
\ee
Therefore, putting \eqref{e1}--\eqref{co-trace} together, we obtain for $s \geq s_0$ and a.e. $x_n \in \R$ that
\bea
& & s  \|  U_t {\rm e}^{-s \eta}  \nabla _{x'} v(\cdot, x_n )  \|_{{\rm L}^2((-T,T) \times
\omega )}^2 +s^3  \| U_t {\rm e}^{-s \eta} v(\cdot, x_n )  \|_{{\rm L}^2((-T,T) \times \omega
)}^2
\nonumber  \\
& & + \sum_{j=1,2} \|  U_t M_{j,n}  e^{-s \eta} v(\cdot, x_n )  \|_{{\rm L}^2(  (-T,T)
\times \omega  )}^2
\nonumber \\
& \leq  & C  \left(  s \|  U_t {\rm e}^{-s \eta} \varphi^{1/2}  ( \partial_{\nu} \beta)^{1/2} 
\partial_{\nu}  v(\cdot, x_n )  \|_{{\rm L}^2( (-T,T) \times \gamma_*)}^2  \right.
+ \left.  \|  U_t {\rm e}^{-s \eta}  L_n v(\cdot, x_n) \|_{{\rm L}^2((-T,T)
\times\omega )}^2 \right).
\nonumber  
\eea
Integrating the above inequality w.r.t. $x_n$ over $\R$ and taking into account that the operator $U_t$ is isometric in ${\rm L}^2(\Omega)$ for each $t \in (-T, T)$
thus yields
\bea 
& & s  \| {\rm e}^{-s \eta}  \nabla _{x'} v  \|_{{\rm L}^2(Q_e)}^2 +s^3  \| {\rm e}^{-s \eta} v  \|_{{\rm L}^2(Q_e)}^2
+ \sum_{j=1,2} \| M_{j,n}  e^{-s \eta} v \|_{{\rm L}^2(Q_e)}^2
\nonumber   \\
& \leq  & C  \left(  s \|  U_t {\rm e}^{-s \eta} \varphi^{1/2}  ( \partial_{\nu} \beta)^{1/2} 
\partial_{\nu}  v \|_{{\rm L}^2( (-T,T) \times \gamma_* \times \R)}^2  \right.
+ \left.  \| {\rm e}^{-s \eta}  L_n v \|_{{\rm L}^2(Q_e)}^2 \right),
\label{e2} 
\eea
provided $s \geq s_0$. Last, setting $\Phi:={\rm e}^{-s \eta} \varphi^{1/2}  ( \partial_{\nu} \beta)^{1/2} 
\partial_{\nu}  v$ and using the unitarity of the operator $\F_{x_n}$ in ${\rm L}^2(\R)$, we derive from \eqref{exp} that
\beas
& & \|  U_t \Phi \|_{{\rm L}^2( (-T,T) \times \gamma_* \times \R)}^2 = \int_{-T}^T \int_{\gamma_*} \| \F_{x_n}^{-1} {\rm e}^{i t k} \widehat{\Phi}(t,x',\cdot) \|_{{\rm L}^2(\R)}^2 dx' dt\\
& = & \int_{-T}^T \int_{\gamma_*} \| {\rm e}^{i t k} \widehat{\Phi}(t,x',\cdot) \|_{{\rm L}^2(\R)}^2 dx' dt = \int_{-T}^T \int_{\gamma_*} \| \widehat{\Phi}(t,x',\cdot) \|_{{\rm L}^2(\R)}^2 dx' dt \\
& = & \| \Phi \|_{{\rm L}^2( (-T,T) \times \gamma_* \times \R)}^2,
\eeas
so \eqref{ec-inf} follows immediately from this and \eqref{e2}.
\end{proof}

\section{Proof of Theorem \ref{thmpr}}
\label{sec-si}
In view of section \ref{sec-carl} we are now in position to solve the inverse problem under study. 
More precisely we apply the Bugkhgeim-Klibanov method presented in \cite{BK} to prove the stability inequality \eqref{eq1a} (resp., the estimate \eqref{eqa2}) with the aid of Proposition \ref{pr-Carl} (resp., Proposition \ref{pr-unboundedCarl}).

\subsection{Proof of the stability inequality \eqref{eq1a}}
\label{sec-proof}
We first introduce the following notation used throughout the entire section: for all $d>0$ we write $\Omega_d:= \omega \times (-d,d)$.

Fix $L> \ell $, set $r:= (L+\ell )/2$ and consider a domain $\mathcal{O}$ in $\R^n$, with $C^2$ boundary $\partial \mathcal{O}$, obeying
\bel{domO}
\Omega_{\ell} \subset \Omega_r \subset \mathcal{O} \subset \Omega_L.
\ee
We next choose $\chi \in C_0^ \infty (\R_{x_n}, [0,1] )$ such that
$$ 
\chi(x_n) :=
\left\{ \begin{array}{ll} 1 & \textrm{if}\ |x_n| \leq \ell  \\ 0 & \textrm{if}\ | x_n| \geq r . \end{array} \right.
$$
Put $w:= \chi v$, where $v$ is the $C^0([-T,T]; {\rm H}_0^1(\Omega) \cap {\rm H}^2(\Omega)) \cap
C^1([-T,T]; {\rm L}^2(\Omega) )$-solution to \eqref{eq5b}; Then the restriction $w_{| \mathcal{O}}$ of $w$ to $\mathcal{O}$ satisfies
\bel{eq5c}
w_{|\mathcal{O}}  \in C^0([-T,T]; {\rm H}_0^1(\mathcal{O} ) \cap {\rm H}^2(\mathcal{O}))\cap
C^1([-T,T]; {\rm L}^2(\mathcal{O}) )
\ee
and is solution to the following system
\bel{eq6}
\left\{ \begin{array}{rcll} -i w'-\Delta w + q_1w & = & \chi f'- K v, & {\rm in}\ (-T,T) \times \mathcal{O},\\ 
w(0,x) &= & i \chi (q_2-q_1)(x) u_0(x), & x\in \mathcal{O}. \\
w(t,x)& = & 0,& (t,x)\in (-T,T) \times \partial \mathcal{O}, \end{array} \right. 
\ee 
where $ K:= [\Delta, \chi] =\ddot{\chi} + 2 \dot{\chi}  \partial_{ x_n} $. 
Here $\dot{\chi}$ (resp., $\ddot{\chi}$) is a shorthand for the first (resp., second) derivative of $\chi$.

As a preamble to the proof of the stability inequality \eqref{eq1a} we first establish two elementary technical results.

\subsubsection{Two auxiliary results}
\begin{lemma} 
\label{lem1}
For $s>0$, let $\phi :=  {\rm e}^{-s \eta} w$, where $w$ is defined by \eqref{eq5c}-\eqref{eq6}. Then we have 
$$   
J:= \|  {\rm e}^{-s \eta(0, \cdot )} w(0,\cdot ) \|_{{\rm
L}^2(\mathcal{O})}^2= 2  \Pim { \int_{(-T,0) \times  \mathcal{O}} M_1 \phi\overline{\phi}  dt dx},
$$
where $M_1$ stands for $M_{1,n}$.
\end{lemma}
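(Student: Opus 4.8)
The plan is to expand $M_1\phi = \mathrm{i}\partial_t\phi + \Delta\phi + s^2|\nabla\eta|^2\phi$ according to the definition \eqref{M1} of $M_{1,n}$, multiply by $\overline{\phi}$, integrate over $(-T,0)\times\mathcal{O}$, and then take twice the imaginary part, keeping track of which of the three resulting terms actually survive. The zeroth order term yields $s^2|\nabla\eta|^2|\phi|^2$, which is real valued, so it contributes nothing to $\Pim{\cdot}$ and can be discarded at once.

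For the second order term I would integrate by parts in the space variable. Since $w$ vanishes on $(-T,T)\times\partial\mathcal{O}$ by the third line of \eqref{eq6} and $\mathrm{e}^{-s\eta}$ is smooth up to the boundary, the function $\phi=\mathrm{e}^{-s\eta}w$ also vanishes on $(-T,0)\times\partial\mathcal{O}$; the regularity \eqref{eq5c} makes this integration by parts legitimate for a.e. $t$. Hence $\int_\mathcal{O}\Delta\phi\,\overline{\phi}\,dx = -\int_\mathcal{O}|\nabla\phi|^2\,dx$ is real, so this term drops out of the imaginary part as well.

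What remains is the first-order-in-time contribution. Using $\Pim{\mathrm{i}z}=\Pre{z}$ together with $\Pre{(\partial_t\phi)\overline{\phi}}=\tfrac12\partial_t|\phi|^2$, the plan gives $2\,\Pim{\int_{(-T,0)\times\mathcal{O}} M_1\phi\,\overline{\phi}\,dt\,dx}=\int_{(-T,0)\times\mathcal{O}}\partial_t|\phi|^2\,dt\,dx$, which by the fundamental theorem of calculus in $t$ equals $\int_\mathcal{O}\big(|\phi(0,x)|^2-\lim_{t\to -T^+}|\phi(t,x)|^2\big)\,dx$. The $t=0$ contribution is exactly $J=\|\mathrm{e}^{-s\eta(0,\cdot)}w(0,\cdot)\|_{\mathrm{L}^2(\mathcal{O})}^2$ by the definition of $\phi$.

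The main obstacle, and really the only nontrivial point, is to show that the boundary term at $t=-T$ vanishes, i.e. that $\lim_{t\to -T^+}\|\phi(t,\cdot)\|_{\mathrm{L}^2(\mathcal{O})}=0$. This follows from the blow-up of the Carleman weight: as $t\to -T^+$ one has $(T+t)(T-t)\to 0^+$, while the numerator $\mathrm{e}^{2\lambda K}-\mathrm{e}^{\lambda\beta(x)}$ stays positive and bounded below on $\overline{\mathcal{O}}$, positivity being guaranteed since the condition $r>1$ in \eqref{defbeta} forces $2K>\beta(x)$. Consequently $\eta(t,x)\to +\infty$ uniformly in $x\in\overline{\mathcal{O}}$, so $\mathrm{e}^{-s\eta(t,x)}\to 0$ uniformly; combined with the uniform $\mathrm{L}^2(\mathcal{O})$ bound on $w(t,\cdot)$ furnished by \eqref{eq5c}, this forces $\phi(t,\cdot)\to 0$ in $\mathrm{L}^2(\mathcal{O})$ and kills the $t=-T$ term, which completes the argument.
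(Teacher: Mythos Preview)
Your proof is correct and follows essentially the same approach as the paper's: both expand $M_1\phi\,\overline{\phi}$, use Green's formula (with the Dirichlet condition on $\phi$) to see that the Laplacian contribution is real, note that the $s^2|\nabla\eta|^2|\phi|^2$ term is real, and reduce everything to $2\,\Pre{\int\partial_t\phi\,\overline{\phi}}=\int\partial_t|\phi|^2$, then appeal to $\eta(t,\cdot)\to+\infty$ as $t\to -T^+$ to kill the lower endpoint. Your justification of the vanishing at $t=-T$ (uniform blow-up of $\eta$ on $\overline{\mathcal{O}}$ via $r>1$, combined with the $C^0([-T,T];{\rm L}^2(\mathcal{O}))$ bound on $w$) is in fact slightly more careful than the paper's, which just states the pointwise limit.
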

\begin{proof}
In light of \eqref{defbeta}-\eqref{defphieta} it holds true that $\lim\limits_{\substack{t \downarrow (-T) }} \eta(t, x)= +\infty$ for all $x \in \mathcal{O}$, hence 
$$\lim\limits_{\substack{t
\downarrow (-T) }} \phi(t, x)= 0.$$ 
Therefore we have $J = \| \phi (0,\cdot) \|_{{\rm
L}^2(\mathcal{O})}^2 =  \int_{(-T,0) \times \mathcal{O}} \partial_t  | \phi |^2  dt dx$, from where we get that
\bel{J1} 
J = 2 \Pre{\int_{(-T,0) \times \mathcal{O}} \partial_t \phi \overline{\phi} dt dx} . 
\ee 
On the other hand, \eqref{M1} and the Green formula yield
\beas   
& & \Pim {\int_{(-T,0) \times \mathcal{O}} (M_1 \phi) \overline{\phi} dt dx} \\
& = & \Pre {\int_{(-T,0) \times \mathcal{O}}   \partial_t \phi  \overline{\phi} dt dx }  +   \Pim {
\int_{(-T,0) \times \mathcal{O}} \Delta \phi \overline{\phi}dt dx + s^2 \| \nabla \eta \phi  \|_{{\rm L}^2(-T,0) \times \mathcal{O}}^2}   \\
& = & \Pre{\int_{(-T,0) \times \mathcal{O}} \partial_t \phi \overline{\phi} dt dx } +  
\Pim{\| \nabla \phi \|_{{\rm L}^2((-T,0) \times \mathcal{O})}^2} = \Pre {\int_{(-T,0) \times \mathcal{O}} \partial_t \phi  \overline{\phi} dt dx },\\
\eeas 
so the result follows readily from this and \eqref{J1}.
\end{proof}

\begin{lemma}  
\label{lem2}
Let $w$ and $J$ be the same as in Lemma \ref{lem1}. Then we have
$$ J \leq  s^{-3/2}  I(w),\ s>0,$$
where
\bel{I}
I(w) := s  \| {\rm e}^{-s \eta}  \nabla w  \|_{{\rm L}^2((-T,T) \times \mathcal{O})}^2  +s^3  \| {\rm e}^{-s \eta} w \|_{{\rm L}^2((-T,T) \times \mathcal{O})}^2 
+ \sum_{j=1,2} \|  M_j  e^{-s \eta}w  \|_{{\rm L}^2(  (-T,T) \times \mathcal{O} )}^2,
\ee
the notation $M_j$, for $j=1,2$, being a shorthand for $M_{j,n}$.
\end{lemma}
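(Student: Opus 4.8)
The plan is to start from the identity established in Lemma \ref{lem1}, namely $J = 2\,\Pim{\int_{(-T,0)\times\mathcal{O}} M_1\phi\,\overline{\phi}\,dt\,dx}$, and to bound its right-hand side by the quadratic functional $I(w)$ through a combination of the Cauchy--Schwarz inequality and Young's inequality with a carefully chosen weight.

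First I would estimate the imaginary part by the modulus, $J \leq 2\,|\int_{(-T,0)\times\mathcal{O}} M_1\phi\,\overline{\phi}\,dt\,dx|$, and apply Cauchy--Schwarz to the space-time integral, obtaining
$$ J \leq 2\,\| M_1\phi \|_{{\rm L}^2((-T,0)\times\mathcal{O})}\,\| \phi \|_{{\rm L}^2((-T,0)\times\mathcal{O})}. $$
Since $\phi = {\rm e}^{-s\eta}w$, the first factor is $\| M_{1,n}{\rm e}^{-s\eta}w \|_{{\rm L}^2((-T,0)\times\mathcal{O})}$, and both norms over $(-T,0)\times\mathcal{O}$ are majorized by the corresponding norms over the full interval $(-T,T)\times\mathcal{O}$. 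Writing $A := \| M_{1,n}{\rm e}^{-s\eta}w \|_{{\rm L}^2((-T,T)\times\mathcal{O})}$ and $B := \| {\rm e}^{-s\eta}w \|_{{\rm L}^2((-T,T)\times\mathcal{O})}$, this reduces the claim to $J \leq 2AB$.

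The final step is to absorb the cross term $2AB$ into $I(w)$ by matching the powers of $s$. Young's inequality with weight $s^{-3/2}$ yields $2AB \leq s^{-3/2}A^2 + s^{3/2}B^2 = s^{-3/2}\bigl(A^2 + s^3 B^2\bigr)$. Now $A^2$ is exactly the $j=1$ summand $\| M_{1,n}{\rm e}^{-s\eta}w \|_{{\rm L}^2((-T,T)\times\mathcal{O})}^2$ and $s^3 B^2$ is exactly the zeroth-order term $s^3\| {\rm e}^{-s\eta}w \|_{{\rm L}^2((-T,T)\times\mathcal{O})}^2$ appearing in the definition \eqref{I} of $I(w)$; since the remaining summands of $I(w)$ (the gradient term and the $j=2$ term) are nonnegative, we conclude $J \leq s^{-3/2}\bigl(A^2 + s^3 B^2\bigr) \leq s^{-3/2} I(w)$, which is the desired estimate for every $s>0$.

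There is no genuine analytical obstacle here: the argument is entirely elementary once the representation of $J$ from Lemma \ref{lem1} is available. The only point requiring care is the bookkeeping of the powers of $s$ in the application of Young's inequality. One must choose the weight so that the prefactor $s^{-3/2}$ pairs the \emph{unweighted} $M_{1,n}$-norm with the $s^3$-weighted ${\rm L}^2$-norm of $w$; any other split of the $s$-powers would fail to reproduce precisely the exponent $-3/2$ that makes the factor on the right-hand side decay as $s \to +\infty$, which is what will later allow this term to be absorbed when the Carleman estimate is invoked.
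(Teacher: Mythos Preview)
Your proof is correct and follows essentially the same route as the paper: Lemma \ref{lem1} followed by Cauchy--Schwarz, then Young's inequality with weight $s^{-3/2}$ to produce $s^{-3/2}(A^2+s^3B^2)$, which is dominated by $s^{-3/2}I(w)$ since the remaining summands of $I(w)$ are nonnegative. The paper's proof is terser but identical in substance.
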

\begin{proof}
In view of Lemma \ref{lem1} and the Cauchy-Schwarz inequality, we have 
\beas
J 
& \leq  & 2 \| M_1 \phi  \|_{{\rm L}^2((-T,T) \times \mathcal{O})} \|  \phi  \|_{{\rm L}^2( (-T,T) \times \mathcal{O} )} \\
& \leq &   s^{-3/2}  \left(   \|  M_1 ( e^{-s \eta}w ) \|_{{\rm L}^2( (-T,T) \times \mathcal{O})}^2
  +  s^3 \| {\rm e}^{-s \eta} w \|_{{\rm L}^2( (-T,T) \times \mathcal{O} )}^2   \right).
\eeas 
This and \eqref{I} yields the desired result.
\end{proof}

\subsubsection{Completion of the proof}
The next step of the proof involves majorizing $I(w)$ with the aid of the Carleman inequality of Proposition \ref{pr-Carl}. 
In view of \eqref{eq5c}-\eqref{eq6} and \eqref{I} the estimate
\bea 
I(w)  & \leq & C \left(    s 
\| {\rm e}^{-s \eta} \varphi^{1/2}  ( \partial_{\nu} \beta)^{1/2}  \partial_{\nu} w \|_{   {\rm L}^2( (-T,T) \times \tilde{\Gamma})}^2
 + \| {\rm e}^{-s \eta}(  \chi f'- K v)   \|_{   {\rm L}^2( (-T,T) \times \mathcal{O})}^2  \right) \nonumber \\
 & \leq  & C  \left(  s \| {\rm e}^{-s \eta} \varphi^{1/2}  ( \partial_{\nu} \beta)^{1/2} \partial_{\nu} w \|_{   {\rm L}^2( (-T,T) \times
 \tilde{\Gamma})}^2 \right.
  \nonumber \\
& & \hspace*{.5cm} + \left.  \sum_{j=0,1} \| {\rm e}^{-s \eta} \nabla^j v  \| _{   {\rm L}^2( (-T,T) \times  (\Omega_r \setminus
\Omega_{\ell}) )}^2+  \| {\rm e}^{-s \eta} \chi f'  \| _{{\rm L}^2( (-T,T) \times \Omega_r )}^2  \right), \label{E}
 \eea
holds with $\tilde{\Gamma}:=\Gamma_\mathcal{O}$ for $s \ge s_0$. Here and henceforth $C$ denotes some generic positive constant.
Bearing in mind that $\eta(0,x ) = \inf_{t \in (-T,T)} \eta(t,x)$ for all $x \in \Omega $ and that $f'= (q_2-q_1)u'_2$, we have
\bel{ee}
\| {\rm e}^{-s \eta} \chi f' \| _{{\rm L}^2( (-T,T) \times \Omega_r )} \leq C  \| {\rm e}^{-s \eta (0, \cdot)} (q_1-q_2)  \| _{{\rm L}^2( \Omega_{\ell} )}.
\ee
Here we used the fact that $\| u_2' \|_{{\rm L}^{\infty}((-T,T) \times \Omega_\ell)} < \infty$.
On the other hand, the function $\varphi \partial_{\nu} \beta$ being bounded in $(-T, T) \times \tilde{\Gamma}$ as well, we deduce from
\eqref{E}-\eqref{ee} that
\bea  
\label{eee}
I(w)  & \leq & C  \left(  s \| {\rm e}^{-s \eta(0,\cdot)} \partial_{\nu} w  \|_{{\rm L}^2( (-T,T) \times
 \tilde{\Gamma})}^2  + \sum_{j=0,1} \| {\rm e}^{-s \eta (0, \cdot)} \nabla^j v  \| _{{\rm L}^2( (-T,T) \times  (\Omega_r
\setminus \Omega_{\ell}) )}^2 \right. \nonumber \\
& & \hspace*{.7cm} \left. +\| {\rm e}^{-s \eta (0, \cdot)} (q_1-q_2)  \|_{{\rm L}^2( \Omega_{\ell} )}^2
\right) \nonumber \\
& \leq  & C  \left( \mathfrak{obs} + \| {\rm e}^{-s \eta (0, \cdot)} (q_1-q_2)  \|_{   {\rm L}^2( \Omega_{\ell} )}^2  \right),\ s \geq s_0,
\eea
where we have set
\bel{obs}
\mathfrak{obs} :=  s \| {\rm e}^{-s \eta(0,\cdot)} \partial_{\nu} w  \|_{{\rm L}^2( (-T,T)
\times \tilde{\Gamma})}^2  +  \sum_{j=0,1} \| {\rm e}^{-s \eta (0, \cdot)} \nabla^j v  \|_{{\rm L}^2( (-T,T) \times
(\Omega_r \setminus \Omega_{\ell} ))}^2. 
\ee
In light of Lemma \ref{lem2}, \eqref{eee}-\eqref{obs} then yields
\bel{d1}
J = \| {\rm e}^{-s \eta(0,\cdot )} \chi (q_1-q_2) u_0 \|_{ {\rm  L}^2(\mathcal{O} )}^ 2  \leq  C s^{-3/2}  \left(\mathfrak{obs} + \| {\rm e}^{-s \eta (0, \cdot)} (q_1-q_2)  \| _{   {\rm L}^2( \Omega_{\ell})}^2  \right), 
\ee
for all $s \geq s_0$. Further since $J \geq \alpha^2  \| {\rm e}^{-s \eta(0, \cdot )} (q_1-q_2) \|_{  {\rm  L}^2( \Omega_{\ell} ) }^ 2$ from the assumption \eqref{eqinitiale} we get that
\bel{d3}
(\alpha^2 -  C s^{-3/2})  \| {\rm e}^{-s \eta(0, \cdot )} (q_1-q_2) \|_{ {\rm  L}^2(\Omega_{\ell}  )}^ 2 \leq C s^{-3/2}  \mathfrak{obs},\ s \geq s_0,
\ee
from \eqref{d1}. Thus chosing $s \geq s_0$ so large that $\alpha^2 -  C
s^{-3/2} \geq  \alpha^2 \slash 2$ and taking into account that  $\inf_{x \in \Omega_{\ell}} {\rm e}^{-s \eta (0,x)} >0$, we derive from \eqref{obs}  and \eqref{d3} that
\bel{d4}
\| q_1-q_2 \|_{ {\rm  L}^2(\Omega_{\ell}  )}^ 2 \leq \ C  \left(  \| \partial_{\nu} w  \|_{{\rm L}^2(
(-T,T) \times  \tilde{\Gamma})}^2  +  \sum_{j=0,1} \| \nabla^j v  \| _{   {\rm L}^2( (-T,T) \times (\Omega_r
\setminus \Omega_{\ell} ) )}^2 \right).
\ee
Now, recalling from \S \ref{sec-timesym} that $\|\nabla^j v  \| _{{\rm L}^2( (-T,T) \times (\Omega_r \setminus \Omega_{\ell} ) )}=2 \| \nabla^j v  \| _{{\rm L}^2( (0,T) \times (\Omega_r
\setminus \Omega_{\ell} ) )}$ for $j=0,1$, and from the identity $w=\chi v$ that $\| \partial_{\nu} w  \|_{{\rm L}^2(
(-T,T) \times  \tilde{\Gamma})}=2\| \partial_{\nu} w  \|_{{\rm L}^2(
(0,T) \times  \tilde{\Gamma})}$, we derive from \eqref{d4} that
\bel{d5}
\| q_1-q_2 \|_{ {\rm  L}^2(\Omega )}^ 2  \leq C  \left(  \| \partial _{\nu} w \|_{ {\rm L}^2( (0,T) \times \tilde{\Gamma})}^2  +
\sum_{j=0,1} \|   \nabla^j  v  \| _{   {\rm L}^2( (0,T) \times (\Omega_r \setminus \Omega_{\ell} ) )}^2
\right).
\ee
Here we used the identity $ \| q_1-q_2 \|_{ {\rm  L}^2(\Omega_\ell)}=\| q_1-q_2 \|_{ {\rm  L}^2(\Omega )}$ arising from \eqref{eq1b}.

Last we set $\Gamma_*:=\tilde{\Gamma} \cap \left( \partial \omega \times [-r,r] \right)$ and $\Gamma_e := \tilde{\Gamma} \cap (\overline{\Omega_L} \setminus \overline{\Omega_r})$ in such a way that 
$$ \tilde{\Gamma} = \Gamma_* \cup \Gamma_e,\  \Gamma_* \cap \Gamma_e = \emptyset, $$
by \eqref{domO}. Since $\chi(x_n)=0$ for every $| x_n | \geq r$ then the function $w=\chi v$ is necessarily uniformly zero in $(0,T) \times (\Omega_L \setminus \Omega_r)$. This entails
$$ \partial_\nu w(t,\sigma) = 0,\ (t,\sigma) \in (0,T) \times \Gamma_e. $$
As a consequence we have 
$$\| \partial _{\nu} w \|_{ {\rm L}^2( (0,T) \times \tilde{\Gamma})} = \| \partial _{\nu} w \|_{ {\rm L}^2( (0,T) \times  \Gamma_*)} \leq C \| \partial _{\nu} v \|_{ {\rm L}^2( (0,T) \times  \Gamma_*)}. $$
Putting this together with \eqref{d5} and the identity $v= (u_1-u_2)'$, we end up getting \eqref{eq1a}.

\subsection{Proof of the stability estimate \eqref{eqa2}}
The proof of \eqref{eqa2} is very similar to the one of \eqref{eq1a}. Therefore, in order to avoid the inadequate expense of the size of this article, we shall skip some technical
details.

The function $v$ still denoting the solution to \eqref{eq5b}, we put 
$$ I(v):=  s  \|    {\rm e}^{-s \eta}  \nabla _{x'} v   \|_{{\rm L}^2(Q_e)}^2 +s^3  \|
{\rm e}^{-s \eta} v  \|_{{\rm L}^2(Q_e)}^2  + \sum_{j=1,2} \|   M_j e^{-s \eta} v  \|_{{\rm L}^2(Q_e)}^2,\ s>0,
$$
where we write $M_j$ for $j=1,2$, instead of $M_{j,n}$.
Then arguing in the exact same way as in the derivation of Lemmae \ref{lem1}-\ref{lem2}, we get that
\bel{eqa2b}
\|  {\rm e}^{-s \eta(0, \cdot )} v(0,\cdot ) \|_{{\rm
L}^2( \Omega)}^2  =\|  {\rm e}^{-s \eta(0, \cdot )} (q_1-q_2) u_0 \|_{{\rm
L}^2( \Omega)}^2  \leq  s^{-3/2}  I(v),
\ee
for all $s>0$.

On the other hand, applying Proposition \ref{pr-unboundedCarl} to \eqref{eq5b} we obtain
\bel{eqa3} 
I(v)   \leq  C \left(    s \| {\rm e}^{-s
\eta} \varphi^{1/2} ( \partial_{\nu} \beta)^{1/2}  \partial_{\nu} v \|_{   {\rm L}^2( (-T,T) \times \gamma_* \times
\R )}^2
 + \| {\rm e}^{-s \eta}  f'  \|_{   {\rm L}^2(Q_e )}^2  \right),
\ee
for every $s \geq s_0$. Further, taking into account that
$\eta(0,x' ) = \inf_{t \in (-T,T)} \eta(t,x')$ for all $x' \in \omega $ and $f'= (q_2-q_1)u'_2$, we derive from \eqref{as} that
\bel{eqa4} \|
{\rm e}^{-s \eta}  f' \| _{{\rm L}^2(Q_e )} \leq C'  \| {\rm e}^{-s \eta (0, \cdot)} (q_1-q_2)  \|
_{{\rm L}^2( \Omega )}, 
\ee
for some generic positive constant $C'$.
Since $\varphi \partial_{\nu} \beta $ is bounded in $(-T, T) \times
\gamma_* \times \R$, \eqref{eqa3}-\eqref{eqa4} then yield
\bel{eqa5}
I(v)  \leq C' \left( s \| {\rm e}^{-s \eta(0,\cdot)} \partial_{\nu} v \|_{{\rm L}^2(
(-T,T) \times \gamma_*  \times \R  )}^2 + \| {\rm e}^{-s \eta (0, \cdot)} (q_1-q_2)  \|_{   {\rm L}^2( \Omega )}^2 \right),\
s \geq s_0.
\ee 

Putting \eqref{eqa2b} and \eqref{eqa5} together we find that
\beas
& & \| {\rm e}^{-s
\eta(0,\cdot )} (q_1-q_2) u_0 \|_{ {\rm  L}^2(\Omega  )}^ 2 \\
& \leq  & C' s^{-3/2}  \left(s \| {\rm e}^{-s \eta(0,\cdot)} \partial_{\nu} v \|_{{\rm L}^2(
(-T,T) \times \gamma_*  \times \R  )}^2 + \| {\rm e}^{-s
\eta (0,\cdot)} (q_1-q_2)  \| _{   {\rm L}^2(\Omega)}^2 \right),\ s \geq s_0.
\eeas
From this and \eqref{eqinitiale} then follows for any $s \geq s_0$ that
\bel{eqa7}
(\alpha^2 -  C' s^{-3/2})  \| {\rm e}^{-s \eta(0, \cdot )} (q_1-q_2)
\|_{ {\rm  L}^2(\Omega)}^ 2 \leq C s^{-1/2} \| {\rm e}^{-s \eta(0,\cdot)} \partial_{\nu} v \|_{{\rm L}^2(
(-T,T) \times \gamma_*  \times \R  )}^2. 
\ee
Choosing $s \geq s_0$ so large that $\alpha^2 -  C' s^{-3/2} \geq  \alpha^2 \slash 2$ and using that  $\inf_{x \in
\omega  } {\rm e}^{-s \eta (0,x)} >0$, we deduce from \eqref{eqa7} that 
$$ 
\| q_1-q_2 \|_{{\rm  L}^2(\Omega  )}^ 2 \leq \ C'  \| \partial_{\nu} v  \|_{{\rm L}^2( (-T,T) \times \gamma_* \times \R
 )}^2. 
$$
Now \eqref{eqa2} follows readily from this upon recalling from \S \ref{sec-linpro} that $v= (u_1-u_2)'$ and from \S \ref{sec-timesym} that $\| \partial_{\nu} v  \|_{{\rm L}^2( (-T,T) \times \gamma_* \times \R  )}=2\| \partial_{\nu} v \|_{{\rm L}^2( (0,T) \times  \gamma_*  \times \R )}$.

\bigskip

\end{document}